\newcommand{\Om} {\Omega}
\newcommand {\ep} {\epsilon}
\newcommand {\gm} {\gamma}
\newcommand {\ii} {\infty}
\newcommand {\dt} {\delta}
\newcommand {\al} {\alpha}
\newcommand {\bt} {\beta}
\newcommand {\lb} {\lambda}
\newcommand {\Lb} {\Lambda}
\newcommand {\sm} {\setminus}
\newcommand {\su} {\subset}
\newcommand {\wt} {\widetilde}
\newcommand {\wh} {\widehat}
\newcommand {\mb} {\mathbf}
\newcommand {\mbb} {\mathbb}
\newcommand {\mc} {\mathcal}
\newtheorem{teo}{Theorem}[section]
\newtheorem{pro}{Proposition}[section]
\newtheorem{lm}{Lemma}[section]
\theoremstyle{definition}
\newtheorem{rem}{Remark}[section]
\newtheorem{df}{Definition}[section]
\title{Noncommutative weighted \\ individual ergodic theorems \\with continuous time}
\keywords{Dunford-Schwartz operator, continuous semigroup, bounded Besicovitch function, almost uniform convergence.}
\subjclass[2010]{47A35(primary), 46L52(secondary)}
\begin{document}
\date{September 5, 2018}

\begin{abstract}
We show that ergodic flows in noncommutative fully symmetric spaces (associated with a semifinite von Neumann algebra) generated by continuous semigroups of positive Dunford-Schwartz operators and modulated by bounded Besicovitch almost periodic functions converge almost uniformly. The corresponding local ergodic theorem is also discussed.
\end{abstract}

\author{VLADIMIR CHILIN}
\address{National University of Uzbekistan, Tashkent, Uzbekistan}
\email{vladimirchil@gmail.com; chilin@ucd.uz}

\author{SEMYON LITVINOV}
\address{Pennsylvania State University \\ 76 University Drive \\ Hazleton, PA 18202, USA}
\email{snl2@psu.edu}

\maketitle
\section{Introduction}
In the classical ergodic theory, Besicovitch-weighted individual ergodic theorems have been studied quite extensively
(see, for example, \cite{ry, bo, bl, lot, dj}).

In the noncommutative setting, first individual
ergodic theorem with bounded Besicovitch weights in a von Neumann algebra was obtained in \cite{he}. Later, in \cite{cls},
a similar result concerning the so-called bilaterally almost uniform convergence (in Egorov's sense) was established
in the $L^1$-space associated with a semifinite von Neumann algebra. In \cite{mmt}, utilizing the approach of \cite{jx},
a multi-parameter version of \cite[Theorem 4.6]{cls} was proved for every noncommutative $L^p$-space
with $1< p<\ii$. Recently, almost uniform convergence (in Egorov's sense) of Besicovitch-weighted ergodic averages in fully symmetric spaces of measurable operators was established in \cite[Theorem 4.7 and Sec.\,6]{cl1}.

Note that all of the above were concerned with bounded Besicovitch sequences - generated by the well-studied
Besicovitch almost periodic functions \cite{be} - leaving open the problem what happens in the case of actions of continuous semigroups, when one has to turn to Bisicovitch almost periodic functions. To this end, a noncommutative local ergodic Besicovitch-weighted theorem was first considered in \cite{mk}; see remarks following Theorem \ref{t22}.

Let $\{T_t\}_{t\ge 0}$ be a (continuous) semigroup of positive Dunford-Schwartz operators in a noncommutative
$(L^1+L^\ii)$-space. Our goal is to show that the corresponding ergodic averages modulated by a bounded Besicovitch (zero-Besicovitch) almost periodic function $\bt(t)$, $t\ge 0$, in a noncommutative fully symmetric space converge almost uniformly as $t\to\ii$ (Theorem \ref{t42}) (respectively, as $t\to 0$ (Theorem \ref{t43})). Since the results appear to be new for the commutative setting, a relevant discussion is given in the last section of the article.

\section{Preliminaries}
Let $\mc M$ be a semifinite von Neumann algebra equipped with a faithful normal semifinite trace $\tau$.
Denote by $\mc P(\mc M)$ the complete lattice of projections in $\mc M$.  If $\mathbf 1$ is the identity of $\mc M$ and
$e\in \mc P(\mc M)$, we write $e^{\perp}=\mathbf 1-e$. Also, if $\mc M$ acts in a Hilbert space $\mc H$, then,
given $\{e_\al\}_{\al \in \Lambda}\su \mc P(\mc M)$, denote $\bigwedge\limits_{\al\in \Lambda}e_\al$ the projection
on the subspace $\bigcap\limits_{\al\in \Lambda}e_\al\mc H$. Note that $\bigwedge\limits_{\al\in \Lambda}e_\al
\in \mc P(\mc M)$ and
$$
\tau\bigg(\bigg[\bigwedge_{n=1}^\ii e_n\bigg]^\perp\bigg)\leq \sum_{n=1}^\ii\tau(e_n^\perp)
$$
for any sequence $\{e_n\}\su \mc P(\mc M)$.

Let $L^0=L^0(\mc M,\tau)$ be the $*$-algebra of $\tau$-measurable operators affiliated with $\mc M$, and let $\|\cdot\|_\ii$ be the uniform norm in $\mc M$. Equipped with the {\it measure topology} given by the system
$$
\mc N(\ep,\dt)=\{ x\in L^0: \ \|xe\|_\ii\leq\dt \text{ \ for some \ } e\in\mc P(\mc M) \text{ \ with \ } \tau(e^\perp)\leq\ep\},
$$
$\ep>0$, $\dt>0$, of (closed) neighborhoods of zero, $L^0$ is a complete metrizable topological $*$-algebra \cite{ne}.

Let $L^p=L^p(\mc M,\tau)$, $1\leq p\leq \ii$, ($L^{\ii}=\mc M$)
be the noncommutative $L^p$-space associated with $(\mc M,\tau)$, and let $\| \cdot \|_p$ be the standard norm in
the space $L^p$, $1\leq p< \ii$. For detailed accounts on the noncommutative $L^p$-spaces, $p\in \{0\}\cup [1,\ii)$,
see \cite{se, ne, ye0, px}.

A net $\{ x_\al\}\su L^0$ is said to converge {\it almost uniformly (a.u.)} ({\it bilaterally almost uniformly (b.a.u.)})
to $x\in L^0$  if for any given $\ep>0$ there
is a projection $e\in \mc P(\mc M)$ such that $\tau(e^{\perp})\leq \ep$ and $\| (x-x_\al)e\|_{\ii}\to 0$
(respectively, $\| e(x-x_\al)e\|_{\ii}\to 0$).

It is well-known that if a sequence in $L^0$ converges in measure, then it has a subsequence converging a.u.
Besides, a sequence in $L^0$ converging in $L^p$ for some $1\leq p\leq\ii$ also converges in measure.

A linear operator $T: L^1+\mc M\to  L^1+\mc M$ is called a {\it Dunford-Schwartz operator} (writing $T \in DS$) if
$$
\| T(x)\|_1\leq \| x\|_1 \ \ \forall \ x\in L^1 \text{ \ and \ } \| T(x)\|_{\ii}\leq \| x\|_{\ii} \ \ \forall \ x\in L^{\ii}.
$$
If a Dunford-Schwartz operator $T$ is positive, that is, $T(x)\ge 0$ whenever $x\ge 0$, we will write $T\in DS^+$.
Note that positive absolute contractions in $L^1$, considered in \cite{ye} and then in \cite{cls, mmt, mk},
can be uniquely extended to positive Dunford-Schwartz operators - see \cite{cl}.

Given $x\in L^1+\mc M$ and $T\in DS$, denote
$$
A_n(x)=\frac1n\sum_{k=0}^{n-1}T^k(x).
$$
The following fundamental result is due to Yeadon \cite[Theorem 1]{ye}.

\begin{teo}\label{t11}
Let  $T\in DS^+$. Then for every
$x \in L^1_+$ and $\lb>0$ there exists $e\in \mc P(\mc M)$ such that
$$
\tau(e^\perp)\leq \frac {\| x\|_1} \lb\text{ \ \ and \ \ } \sup_n \| e A_n(x)e \|_\ii \leq \lb.
$$
\end{teo}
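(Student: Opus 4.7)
The plan is to construct $e$ as the infimum $\bigwedge_{n\ge 0}p_n$ of a decreasing chain of projections $\mb 1=p_0\ge p_1\ge p_2\ge\cdots$ built by successive spectral truncation of the partial sums $s_n=nA_n(x)=\sum_{k=0}^{n-1}T^k(x)\in L^1_+$. Given $p_{n-1}$, I take $p_n\le p_{n-1}$ to be the spectral projection of the self-adjoint operator $p_{n-1}s_np_{n-1}$ corresponding to the interval $[0,n\lb]$. By spectral calculus, $p_ns_np_n\le n\lb\,p_n$, hence $\|p_nA_n(x)p_n\|_\ii\le\lb$. Setting $e=\bigwedge_n p_n$, we have $e\le p_n$ for every $n$, so $\|eA_n(x)e\|_\ii\le\lb$ uniformly in $n$.

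The problem then reduces to the uniform trace estimate $\tau(\mb 1-p_N)\le\|x\|_1/\lb$ for every $N$; normality of $\tau$ will then yield $\tau(e^\perp)\le\|x\|_1/\lb$. To prove this I would exploit the recursion $s_n=T(s_{n-1})+x$ together with positivity of $T$ and the trace contractivity $\tau(T(y))\le\tau(y)$ for $y\in L^1_+$. Writing $\mb 1-p_N=\sum_{n=1}^N e_n$ with $e_n=p_{n-1}-p_n$, the spectral definition of $p_n$ guarantees $e_n\,p_{n-1}s_np_{n-1}\,e_n\ge n\lb\,e_n$, hence $n\lb\,\tau(e_n)\le\tau(e_ns_ne_n)$. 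One then telescopes by splitting off the increment $T^{n-1}(x)=s_n-s_{n-1}$ at each step and pushing the contribution from $T(s_{n-1})$ onto the previous stage via positivity, so that the total mass $\sum_n\tau(e_n)$ can be charged against the single quantity $\tau(x)/\lb=\|x\|_1/\lb$.

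The main obstacle is this last telescoping step. In the classical (commutative) setting it is the standard Hopf maximal lemma, proved by disjointification of level sets; in the noncommutative world the projections $e_n$ fail to commute with the iterates $T^k(x)$, so a naive disjointification is unavailable. The delicate point is to compare $p_{n-1}s_np_{n-1}$ with $T(p_{n-2}s_{n-1}p_{n-2})+x$ in the trace and extract a non-negative remainder: this requires using positivity of $T$ (rather than mere $L^1$-contractivity) to dispose of the non-diagonal cross-terms $T(p_{n-2}y(\mb 1-p_{n-2}))$ that appear when $p_{n-2}$ does not commute with $T^k(x)$. Closing this algebraic estimate — which is precisely where the hypothesis $T\in DS^+$ is essential — is the heart of Yeadon's original inductive argument, after which both desired inequalities follow by passage to the infimum.
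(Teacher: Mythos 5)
The paper does not prove this statement at all: it is quoted as Yeadon's maximal ergodic theorem and referred to \cite[Theorem 1]{ye}, so the only fair comparison is with Yeadon's original argument. Your outline gets the easy half right: once you have a decreasing chain $p_n$ with $p_ns_np_n\le n\lb\,p_n$ (note that, as written, the spectral projection of $p_{n-1}s_np_{n-1}$ for $[0,n\lb]$ dominates $\mb 1-p_{n-1}$, so you really mean $p_n=p_{n-1}\wedge q_n$ with $q_n$ that spectral projection), the bound $\sup_n\|eA_n(x)e\|_\ii\le\lb$ for $e=\bigwedge_np_n$ is immediate, and normality of $\tau$ reduces everything to the uniform estimate $\sum_{n\le N}\tau(e_n)\le\|x\|_1/\lb$.

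But that estimate is the entire content of the theorem, and your proposal does not prove it. The inequality $n\lb\,\tau(e_n)\le\tau(e_ns_ne_n)$ by itself only yields $\tau(e_n)\le\tau(x)/\lb$ for each $n$ separately, which sums to $N\tau(x)/\lb$; to do better you must, as you say, ``push the contribution from $T(s_{n-1})$ onto the previous stage,'' and you correctly identify that the non-commuting cross-terms $T\bigl(p_{n-1}s_{n-1}(\mb 1-p_{n-1})\bigr)$ obstruct the naive telescoping. Identifying the obstruction, however, is not the same as overcoming it: you explicitly defer the closing of this estimate to ``Yeadon's original inductive argument'' without exhibiting it, so the proposal is a plan with its central step missing rather than a proof. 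Moreover, it is not clear that this particular greedy truncation scheme is the one that Yeadon's induction actually closes (his construction of the projections is organized differently precisely so that the trace of each increment can be charged against $\tau(x)$). To be acceptable you would either have to carry out the inductive trace estimate in full for your chain $\{p_n\}$, or do what the paper does and simply cite \cite[Theorem 1]{ye}.
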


Let a semigroup $\{T_t\}_{t\ge 0}\su DS$ be {\it strongly continuous in} $L^1$, that is,
$$
\| T_t(x)-T_{t_0}(x)\|_1\to 0 \text{ \ \ whenever\ \ } t\to t_0 \text{ \ \ for all\ \ } x\in L^1.
$$
Then, given $x\in L^1$ and $t>0$, there exists
$$
A_t(x)=\frac1t\int_0^tT_s(x)ds\in L^1
$$
(see the argument preceding (\ref{e22})).

Here is a continuous extension of Theorem \ref{t11} (cf. \cite[Remark 4.7]{jx}):

\begin{teo}\label{t12}
If $\{T_t\}_{t\ge 0}\su DS^+$ is strongly continuous in $L^1$, then,
given $x\in L^1_+$ and $\lb>0$, there exists $e\in \mc P(\mc M)$ such that
$$
\tau(e^\perp)\leq \frac {2\| x\|_1} \lb
\text{ \ \ and \ \ } \sup_{t> 0} \| eA_t(x)e \|_\ii \leq\lb.
$$
\end{teo}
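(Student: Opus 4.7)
I will reduce Theorem \ref{t12} to Yeadon's discrete maximal inequality (Theorem \ref{t11}) via a discretization argument. The central auxiliary object is $S=\int_0^1 T_u\,du$, a Bochner integral in $L^1$ that is well-defined by strong $L^1$-continuity of the semigroup. Since each $T_u\in DS^+$ and averaging preserves the $L^1$- and $L^\ii$-contractions, $S\in DS^+$ with $\|Sx\|_1\le\|x\|_1$ whenever $x\in L^1_+$.

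The key identity, obtained from the semigroup property $T_{k+v}=T_1^k T_v$ together with Fubini, is
$$
\int_k^{k+1}T_u(x)\,du=T_1^k(Sx)\qquad\text{for integer }k\ge 0,
$$
which on summation yields $\int_0^n T_u(x)\,du=nA_n(T_1,Sx)$ for every $n\in\mbb N$. For $t\ge 1$ and $n=\lceil t\rceil$, positivity of the integrand gives $tA_t(x)\le\int_0^n T_u(x)\,du=nA_n(T_1,Sx)$, so $A_t(x)\le 2A_n(T_1,Sx)$, using $n\le t+1\le 2t$.

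Applying Theorem \ref{t11} to $T_1\in DS^+$ and $Sx\in L^1_+$ with parameter $\lambda/2$ produces $e\in\mc P(\mc M)$ with $\tau(e^\perp)\le 2\|Sx\|_1/\lambda\le 2\|x\|_1/\lambda$ and $\sup_{n\ge 1}\|eA_n(T_1,Sx)e\|_\ii\le\lambda/2$. Combined with the domination this yields $\sup_{t\ge 1}\|eA_t(x)e\|_\ii\le\lambda$ and the required trace estimate. The factor $2$ in the trace bound is precisely the price of the ratio $n/t\le 2$ in the discretization step.

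The hard part will be extending the bound to $t\in(0,1)$ with the same projection. Applying the same scheme to the rescaled semigroup $T^{(1/N)}_s:=T_{s/N}$ (which is $C_0$ in $L^1$, positive Dunford--Schwartz, and satisfies $A^{(1/N)}_s(x)=A_{s/N}(x)$) yields, for each $N\in\mbb N$, a projection $e_N$ with $\tau(e_N^\perp)\le 2\|x\|_1/\lambda$ and $\sup_{t\ge 1/N}\|e_N A_t(x)e_N\|_\ii\le\lambda$. The main obstacle is to extract from the family $\{e_N\}$ a single projection $e$ enjoying the full conclusion without inflating $\tau(e^\perp)$ past $2\|x\|_1/\lambda$; I expect this to be handled by a weak$^*$-compactness / diagonal argument in the unit ball of $\mc M$, combined with the $L^1$-continuity of $t\mapsto A_t(x)$ and the lower semicontinuity of the uniform norm under appropriate limits.
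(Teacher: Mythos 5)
Your reduction for $t\ge 1$ is correct and is essentially the paper's own discretization specialized to denominator $1$: the identity $\int_0^n T_u(x)\,du=\sum_{k=0}^{n-1}T_1^k(Sx)$ with $S=\int_0^1T_u\,du$ is exactly the paper's computation of $A_{n/m}(x)$ with $m=1$, and the ceiling trick $A_t(x)\le 2A_{\lceil t\rceil}(T_1,Sx)$ together with Theorem \ref{t11} at level $\lambda/2$ gives both stated bounds on $[1,\infty)$. But the theorem asserts $\sup_{t>0}$, and your treatment of $t\in(0,1)$ is a genuine gap, which you acknowledge. The repair you sketch does not work as described: the rescaled semigroups produce projections $e_N$, each with $\tau(e_N^\perp)\le 2\|x\|_1/\lambda$, but $\bigwedge_N e_N$ destroys the trace bound, while a weak$^*$ cluster point of $\{e_N\}$ in the unit ball of $\mc M$ is in general only a positive contraction, not a projection, and neither the trace estimate nor the bound $\|e_N A_t(x)e_N\|_\infty\le\lambda$ passes to such a limit in any straightforward way. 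Note also that the small-$t$ regime cannot be absorbed into the large-$t$ one: as $t\to 0$ the averages $A_t(x)$ tend to $x$ in $L^1$, and $x\in L^1_+$ need not be bounded, so controlling $\|eA_t(x)e\|_\infty$ near $0$ genuinely requires the finer discretizations.

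The paper avoids the large/small dichotomy by running the same identity for every positive rational $r=n/m$, namely $A_{n/m}(x)=\frac1n\sum_{k=0}^{n-1}T_{1/m}^k(y_m)$ with $y_m=\int_0^1T_{s/m}(x)\,ds$, which covers arbitrarily small $r$, and then passes from rationals to all $t>0$ using $L^1$-continuity of $t\mapsto A_t(x)$: it extracts a subsequence with $A_{r_{n_k}}(x)\to A_t(x)$ a.u.\ and intersects with a second projection $g$, the constant $2$ in $\tau(e^\perp)\le 2\|x\|_1/\lambda$ arising from $e=f\wedge g$. (The difficulty you ran into --- producing one projection valid for all scales $1/m$ simultaneously --- is exactly the point at which the paper's own write-up is terse, since Theorem \ref{t11} applied to each pair $(T_{1/m},y_m)$ separately yields a priori a different projection for each $m$.) To complete your version you should abandon the compactness idea and instead follow the rational-approximation scheme, observing for the final step that if $A_{t_n}(x)\to A_t(x)$ in $L^1$, hence in measure, and $\|fA_{t_n}(x)f\|_\infty\le\lambda$ for all $n$, then $\|fA_t(x)f\|_\infty\le\lambda$ because the ball of radius $\lambda$ in $\mc M$ is closed in the measure topology.
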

\begin{proof}
Let $\mbb N$ ($\mbb Q$) be the set of all natural (respectively, rational) numbers, and let
$\frac nm\in \mbb Q$, where $n, m \in \mbb N$. Denote  $y = \int_0^{1}T_{s/m}(x)  ds$.
We have
\begin{equation*}
\begin{split}
0\leq A_{\frac nm}(x)&=\frac mn\int_0^{\frac nm}T_s(x) ds =
\frac1n \int_0^{n}T_{s/m}(x) ds\\
&=\frac 1n\bigg(\int_0^{1}T_{s/m}(x) ds+\dots + \int_{n-1}^{n}T_{s/m}(x) ds\bigg)
=\frac 1n  \sum_{k=0}^{n-1}T_{1/m}^k(y).
\end{split}
\end{equation*}
By Theorem \ref{t11}, given $\lb>0$, there is $f\in \mc P(\mc M)$ such that
$$
\tau(f^\perp)\leq \frac {\| y\|_1}\lb \leq \frac {\| x\|_1}\lb \ \text{ \ \ and \ \ } \sup_n \bigg\| f\,
\frac 1n\sum_{k=0}^{n-1} T_{1/m}^k(y)\, f \bigg\|_\ii\leq \lb,
$$
implying, that
$$
\sup_{0<r\in\mbb Q} \| fA_r(x)f\|_\ii \leq \lb.
$$

If $t>0$ and $0<r_n\to t, \ r_n\in \mbb Q$, then we have $A_{r_n}(x)\to A_t(x)$ in $L^1$, hence in measure. Therefore
 $A_{r_{n_k}}(x)\to A_t(x)$ a.u. for a subsequence $\{r_{n_k}\}\su \{r_n\}$. Thus, it is possible to find
$g\in \mc P(\mc M)$ such that
$$
\tau(g^\perp)\leq \frac {\| x\|_1}\lb \text { \ \ and \ \ } \|gA_{r_{n_k}}(x)g\|_\ii \to \| gA_t(x)g\|_\ii \text{ \ as \ } k\to \ii.
$$
Letting $e=f\land g$, we obtain the desired inequalities.
\end{proof}

\section{Convergence in the space $L^1(\mc M,\mc\tau)$}
Let $\mbb C$ be the field of complex numbers, and let $\mbb C_1= \{z\in \mbb C: |z|=1\}$. A function $p:\mbb R_+\to \mbb C$ is called a {\it trigonomertic polynomial} if $p(t)=\sum\limits_{j=1}^nw_j\lb_j^t$, where $n\in \mbb N$, $\{w_j\}_1^n\su \mbb C$, and $\{\lb_j\}_1^n\su \mbb C_1$.

A Lebesgue measurable function $\bt: \mbb R_+\to \mbb C$ will be called {\it bounded Besicovitch} ({\it zero-Besicovitch}) {\it function} if  $\sup\limits_{t\ge 0}|\bt(t)|<C<\ii$, and for every $\ep>0$ there is a trigonometric polynomial $p_\ep$ such that
\begin{equation}\label{e21}
\limsup_{t\to\ii}\frac 1t\int_0^t|\bt(s)-p_\ep(s)|ds<\ep
\end{equation}
(respectively,
\begin{equation}\label{e21z}
\limsup_{t\to 0}\frac 1t\int_0^t|\bt(s)-p_\ep(s)|ds<\ep\, ).
\end{equation}

Let $\{T_t\}_{t\ge 0}\su DS^+$ be a strongly continuous semigroup in $L^1$, and assume that $\bt:\mbb R_+\to\mbb C$ be a Lebesgue measurable function with $\|\bt\|_\ii<\ii$. Fix $x\in L^1$. Then for any given $y\in\mc M$ the function $\varphi_{x,y}(t)=\tau(T_t(x)y)$ is continuous on $\mbb R_+$. Therefore, if $\mu$ is the Lebesgue measure on $\mbb R_+$, then the map $U_x: \mbb R_+\to  L^1$ defined as $U_x(t) = T_t(x)$ is weakly $\mu$-measurable \cite[Ch.V,\ \S\,4]{yo}. Since, in addition, $U_x(\mbb R)$ is a separable subset in $L^1$, Pettis theorem \cite[Ch.V, \S\,4]{yo} entails that the map $U_x$ is strongly $\mu$-measurable and the real function $\|U_x(t)\|_1=\|T_t(x)\|_1$ is $\mu$-measurable on
$\mbb R_+$. Since $\|T_t(x)\|_1\leq\|x\|_1$, it follows that $\|T_s(x)\|_1$ is an integrable function on $[0, t]$ for any $t>0$. Consequently, $\|\bt(s)T_s(x)\|_1=|\bt(s)|\cdot\|T_s(x)\|_1$ is also integrable on $[0, t]$ for any $t>0$.
By \cite[Ch.V, \S\,5, Theorem 1]{yo}, the function $\bt(s)T_s(x)$ is Bochner $\mu$-integrable on $[0,t]$, $t>0$. Therefore, for   any $x\in L^1$ and $t>0$ there exists
\begin{equation}\label{e22}
B_t(x)=\frac 1t\int_0^t\bt(s)T_s(x)ds \in L^1.
\end{equation}

\begin{lm}\label{l23}
Let $\{T_t\}_{t\ge 0}\su DS^+$ be a strongly continuous semigroup in $L^1$, and let $\bt: \mbb R_+\to \mbb C$ be a Lebesgue measurable function such that $\sup\limits_{t\ge 0}|\bt(t)|\leq C<\ii$. If $x\in L^1$ and $\ep>0$, then there is a projection $e\in \mc P(\mc M)$ satisfying inequalities
$$
\tau(e^\perp)\leq \frac{4\|x\|_1}\ep \text{ \ \ and \ \ } \sup_{t>0}\|eB_t(x)e\|_\ii\leq 48C\ep.
$$
\end{lm}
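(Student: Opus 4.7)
The plan is to reduce the estimate to a positive, unweighted situation so that Theorem \ref{t12} can be applied directly. Complex‐valued weights and general trace class elements prevent a direct maximal argument, so I would first split both $\bt$ and $x$ into positive real pieces.

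First, I would write $\bt(s)=\bt_1(s)-\bt_2(s)+i\bt_3(s)-i\bt_4(s)$, where $\bt_k$ denotes the positive/negative parts of the real and imaginary parts of $\bt$; each $\bt_k$ is Lebesgue measurable with $0\le\bt_k(s)\le C$. Similarly, writing $x=\tfrac12(x+x^*)+\tfrac1{2i}(x-x^*)$ and then decomposing the self-adjoint pieces into positive and negative parts via the Jordan decomposition, I would obtain $x=x_1-x_2+i(x_3-x_4)$ with $x_j\in L^1_+$ and $\|x_j\|_1\le \|x\|_1$ for $j=1,\dots,4$.

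Expanding the product $\bt(s)T_s(x)$ under the integral then writes $B_t(x)$ as a linear combination, with coefficients in $\{\pm1,\pm i\}$, of the $16$ positive operators
\[
B_t^{j,k}(x)=\frac1t\int_0^t\bt_k(s)\,T_s(x_j)\,ds,\qquad 1\le j,k\le 4.
\]
Because $0\le\bt_k(s)\le C$ and $T_s(x_j)\ge 0$, the Bochner integral obeys the operator domination
\[
0\le B_t^{j,k}(x)\le C\cdot\frac1t\int_0^tT_s(x_j)\,ds=C\,A_t(x_j),
\]
for every $t>0$ and all $j,k$. This reduces the weighted maximal control to the unweighted one supplied by Theorem \ref{t12}.

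Now, given $\ep>0$, I would apply Theorem \ref{t12} to each $x_j\in L^1_+$ with the choice $\lb=3\ep$ to obtain projections $f_j\in\mc P(\mc M)$ such that $\tau(f_j^\perp)\le 2\|x\|_1/(3\ep)$ and $\sup_{t>0}\|f_jA_t(x_j)f_j\|_\ii\le 3\ep$, and set $e=f_1\wedge f_2\wedge f_3\wedge f_4$. The subadditivity of $\tau$ on projections yields $\tau(e^\perp)\le 8\|x\|_1/(3\ep)\le 4\|x\|_1/\ep$. For the maximal bound, I would use the elementary fact that if $0\le A\in L^1+\mc M$ and $e\le f$ are projections in $\mc M$, then $\|eAe\|_\ii=\|A^{1/2}e\|_\ii^2\le\|A^{1/2}f\|_\ii^2=\|fAf\|_\ii$; applying this to $A=B_t^{j,k}(x)$ and $f=f_j$ combined with the domination above gives $\|eB_t^{j,k}(x)e\|_\ii\le C\|f_jA_t(x_j)f_j\|_\ii\le 3C\ep$. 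Summing over the $16$ terms and using the triangle inequality with $|\pm 1|=|\pm i|=1$ produces $\|eB_t(x)e\|_\ii\le 48C\ep$ uniformly in $t>0$, which is the desired estimate.

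The only nonroutine ingredient is the domination of $B_t^{j,k}(x)$ by $C\,A_t(x_j)$ as a Bochner integral of positive operators; once this is set, everything reduces mechanically to Theorem \ref{t12} and the projection-lattice bookkeeping. No further appeal to the specific structure of $\bt$ beyond measurability and boundedness is needed, which is exactly why the lemma will later be applicable to arbitrary bounded Besicovitch weights.
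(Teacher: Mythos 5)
Your proof is correct and follows essentially the same route as the paper's: decompose $x$ into four positive $L^1$ pieces, dominate the weighted averages by $C\,A_t(x_j)$, apply Theorem \ref{t12} to each piece, and take the meet of the resulting projections. The only (harmless) difference is in how the weight is made nonnegative --- you split $\bt$ into its four positive/negative real and imaginary parts ($16$ dominated terms with $\lb=3\ep$), whereas the paper shifts $\operatorname{Re}\bt$ and $\operatorname{Im}\bt$ by $+C$ and subtracts the correction $C(1+i)A_t(x_j)$ ($\lb=2\ep$); both yield the stated constants.
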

\begin{proof}
We have $x=(x_1-x_2)+i(x_3-x_4)$, where $x_j\in L^1_+$ and $\|x_j\|_1\leq \|x\|_1$ for each $j=1,2,3,4$.
By Theorem \ref{t12}, given $j$, there exists $e_j\in \mc P(\mc M)$ such that
$$
\tau(e_j^\perp)\leq\frac{\|x_j\|_1}\ep \text{ \ \ and \ \ } \sup_{t>0}\Big\| e_j\, \frac 1t\int_0^tT_s(x_j)ds\, e_j\Big\|_\ii\leq 2\ep.
$$

Next, we have $0\leq \operatorname{Re}\bt(s)+C\leq 2C$ and $0\leq \operatorname{Im}\bt(s)+C\leq 2C, \ s \geq 0$,  implying that
$$
0\leq [\operatorname{Re}\bt(s)+C]T_s(x_j)\leq 2CT_s(x_j) \text{\ \ and\ \ }
0\leq [\operatorname{Im}\bt(s)+C]T_s(x_j)\leq 2CT_s(x_j)
$$
for all $s\ge 0$, hence
$$
0\leq \frac 1t\int_0^t[\operatorname{Re}\bt(s)+C]T_s(x_j)ds\leq 2C\frac 1t\int_0^tT_s(x_j)ds
$$
and
$$
0\leq \frac 1t\int_0^t[\operatorname{Im}\bt(s)+C]T_s(x_j)ds\leq 2C\frac 1t\int_0^tT_s(x_j)ds.
$$
for each $j$. This, together with the decomposition
\begin{equation*}
\begin{split}
B_t(x_j)&=\frac 1t\int_0^t[\operatorname{Re}\bt(s)+C]T_s(x_j)ds+i\frac 1t\int_0^t[\operatorname{Im}\bt(s)+C]T_s(x_j)ds
\\
&-C(1+i)\frac 1t\int_0^tT_s(x_j)ds,
\end{split}
\end{equation*}
implies that
$$
\sup_{t>0}\|e_jB_t(x_j)e_j\|_\ii\leq 12C\ep, \ \ j=1,2,3,4.
$$
Now, letting $e=\bigwedge\limits_{j=1}^4e_j$, we obtain the desired inequalities.
\end{proof}

The following is a noncommutative counterpart of the classical notion of continuity at zero of the maximal operator
(see \cite[Proposition 1.1 and ensuing remarks]{li}).
\begin{df}
Let $(X,\| \cdot\|)$ be a Banach space. A sequence of linear maps $M_n:X\to L^0$ is called {\it bilaterally uniformly
equicontinuous in measure (b.u.e.m.)} at zero if, given $\ep>0$ and $\dt>0$, there exists $\gm>0$ such that
for every $\| x\|<\gm$ there exists a projection $e\in\mc P(\mc M)$ satisfying conditions
$$
\tau(e^\perp)\leq\ep \text{\ \ and\ \ } \sup_n\|eM_n(x)e\|_\ii\leq\dt.
$$
\end{df}

In order to establish a.u. convergence - which is generally stronger than b.a.u. convergence - of the averages $B_t(x)$, we will need the following lemma a proof of which can be found in \cite[Lemma 3.2]{li1}; see also \cite{li2}.

\begin{lm}\label{l24}
Let $M_n: L^1\to L^1$ be sequence of linear maps that is b.u.e.m. at zero. If $\{x_m\}\su L^1$ is such that
$\|x_m\|_1\to 0$, then for every $\ep>0$ and $\dt>0$ there are $e\in \mc P(\mc M)$ and $x_{m_0}\in \{x_m\}$
satisfying conditions
$$
\tau(e^\perp)\leq \ep \text{\ \ and\ \ } \sup_n\|M_n(x_{m_0})e\|_\ii\leq \dt.
$$
\end{lm}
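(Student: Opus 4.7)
The strategy is to combine the b.u.e.m.\ hypothesis with the decay $\|x_m\|_1 \to 0$ via a subsequence extraction, and then upgrade the resulting bilateral maximal estimate to the one-sided estimate demanded by the conclusion. Using b.u.e.m.\ at zero, I first choose for each $k\in\mbb N$ a threshold $\gm_k>0$ such that every $y\in L^1$ with $\|y\|_1<\gm_k$ admits a projection $f_y\in\mc P(\mc M)$ with $\tau(f_y^\perp)\leq \ep/2^{k+1}$ and $\sup_n\|f_y M_n(y) f_y\|_\ii\leq \eta_k$, where $\{\eta_k\}$ is a rapidly decreasing sequence to be pinned down. Since $\|x_m\|_1\to 0$, I pass to a subsequence $\{x_{m_k}\}$ with $\|x_{m_k}\|_1<\gm_k$, yielding projections $f_k:=f_{x_{m_k}}$ satisfying the above bilateral bounds.

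The crux is then to convert bilateral control of $M_n(x_{m_k})$ for the whole tail into one-sided control of $M_n(x_{m_{k_0}})$ for a single index. Writing $M_n(x_{m_{k_0}})e = f_{k_0} M_n(x_{m_{k_0}}) e + f_{k_0}^\perp M_n(x_{m_{k_0}}) e$ shows that the bilateral bound handles the first summand, but the off-diagonal piece $f_{k_0}^\perp M_n(x_{m_{k_0}}) f_{k_0}$ must be tamed by refining $f_{k_0}$. I would refine it via a spectral projection $p_n\leq f_{k_0}$ supported on the vectors along which $M_n(x_{m_{k_0}})$ acts with norm $\leq\dt$; a Chebyshev-type estimate controls $\tau(f_{k_0}-p_n)$ in terms of $\eta_{k_0}/\dt^2$, and taking $e:=\bigwedge_n p_n$ (or a meet over a suitable countable dense subset of the index set, justified by continuity of $M_n$ in $L^1$) keeps the trace defect summable provided $\eta_{k_0}$ was chosen small enough.

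The role of the hypothesis $\|x_m\|_1\to 0$ is essential: it provides arbitrarily small $\|x_{m_{k_0}}\|_1$, which via b.u.e.m.\ forces $\eta_{k_0}$ as small as needed, so that the spectral cut-off succeeds with a total trace loss at most $\ep$ and a final one-sided bound at most $\dt$. Setting $m_0:=m_{k_0}$ for the chosen index then yields the required pair $(e,x_{m_0})$.

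\paragraph{Main obstacle.} The hardest point is the bilateral-to-one-sided passage: the quantity $\|M_n(y)e\|_\ii^2=\|eM_n(y)^*M_n(y)e\|_\ii$ involves $M_n(y)^*M_n(y)$, which is not linear in $y$, so b.u.e.m.\ cannot be invoked directly on this quadratic expression. The workaround is the spectral-projection truncation sketched above, whose viability relies on having arbitrarily small $\eta_{k_0}$ — which is exactly what the sequence hypothesis $\|x_m\|_1\to 0$ supplies, and which explains why the conclusion only delivers some $x_{m_0}$ rather than a uniform statement over the whole sequence.
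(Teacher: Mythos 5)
The paper does not actually prove this lemma: it is quoted verbatim from \cite{li1} (Lemma 3.2 there; see also \cite{li2}). So the only question is whether your sketch closes the genuinely delicate point, namely the passage from the bilateral bound $\sup_n\|f M_n(x) f\|_\ii\leq\eta$ supplied by the b.u.e.m.\ hypothesis to the one-sided bound $\sup_n\|M_n(x_{m_0})e\|_\ii\leq\dt$ demanded by the conclusion. You correctly identify this as the main obstacle, but the workaround you propose does not close it. First, the asserted ``Chebyshev-type estimate'' $\tau(f_{k_0}-p_n)\lesssim\eta_{k_0}/\dt^2$ has no visible source: $\eta_{k_0}$ bounds the \emph{operator norm} of the compression $f_{k_0}M_n(x_{m_{k_0}})f_{k_0}$, and an operator-norm bound on a compression gives no control on the trace of spectral projections of $M_n(x_{m_{k_0}})^*M_n(x_{m_{k_0}})$. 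The part of $M_n(x_{m_{k_0}})$ that the compression does not see --- for instance the corner $f_{k_0}^{\perp}M_n(x_{m_{k_0}})f_{k_0}$ --- can push $|M_n(x_{m_{k_0}})|^2$ above $\dt^2$ on a projection whose trace is governed by $\tau(f_{k_0}^{\perp})$ or by $\|M_n(x_{m_{k_0}})\|_1/\dt$, not by $\eta_{k_0}$; so the quantity you want to be small is not the quantity your hypothesis makes small.

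Second, and more fatally, whatever per-$n$ bound $\tau(f_{k_0}-p_n)\leq c$ you do obtain is a fixed positive quantity independent of $n$ (it can only depend on $k_0$, $\ep$, $\dt$), so the estimate
$$
\tau\Big(\Big[\bigwedge_n p_n\Big]^{\perp}\Big)\leq \tau(f_{k_0}^{\perp})+\sum_n\tau(f_{k_0}-p_n)
$$
diverges. ``Choosing $\eta_{k_0}$ small enough'' cannot make an infinite sum of identical positive terms converge, and the remark about passing to a countable dense subset of the index set does not help: the index set $\{n\}$ is already countable, and the problem is summability of the trace defects, not cardinality. A single application of the b.u.e.m.\ hypothesis to a single element $x_{m_{k_0}}$ simply cannot supply the $n$-dependent smallness that your meet $\bigwedge_n p_n$ would require. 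This is precisely why the lemma is nontrivial --- and why the paper defers to \cite[Lemma 3.2]{li1} instead of giving a short argument. As written, your sketch identifies the right difficulty but does not constitute a proof; you should consult the argument in \cite{li1, li2}, which resolves the bilateral-to-unilateral passage by a different mechanism rather than by an $n$-by-$n$ spectral truncation.
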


In what follows we shall assume that $\mc M$ has separable predual. Denote by $\nu$ the normalized Lebesgue measure on $\mbb C_1$. Let $\wt{\mc M}$ be the von Neumann algebra of essentially bounded ultraweakly measurable functions $\wt f: (\mbb C_1,\nu)\to \mc M$
equipped with the trace
$$
\wt \tau(\wt f)=\int_{\mbb C_1}\tau(\wt f(z))d\nu(z), \ \ \wt f\ge 0.
$$
Let $\wt{L^1}$ be the Banach space of Bochner $\nu$-integrable functions $\wt f: (\mbb C_1,\nu)\to L^1(\mc M,\tau)$.
As the predual of $\wt{\mc M}$ \cite[Theorem 1.22.13]{sa}, the space $\wt{L^1}$ is isomorphic to
$L^1(\wt{\mc M},\wt\tau)$.

Repeating the argument in \cite[Lemma 2]{da} (see also \cite[Lemma 4.1]{cls}), we obtain the following.
\begin{lm}\label{l21}
If $\wt{L^1}\ni \wt f_t\to \wt f\in \wt{L^1}$ a.u. as $t\to \ii$, then $\wt f_t(z)\to \wt f(z)$ a.u. as $t\to\ii$
for $\nu$-almost all $z\in \mbb C_1$.
\end{lm}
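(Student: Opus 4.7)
The plan is to lift a.u.\ convergence from $\wt{L^1}$ fiberwise by combining the standard Chebyshev--Borel--Cantelli device with the identification of projections in $\wt{\mc M}$ with measurable fields $z\mapsto e(z)\in\mc P(\mc M)$; this identification satisfies $\wt\tau(\wt e)=\int_{\mbb C_1}\tau(e(z))\,d\nu(z)$ and $\|\wt g\|_\ii=\operatorname{ess\,sup}_z\|g(z)\|_\ii$.

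First, I would choose a summable sequence, say $\ep_n=4^{-n}$, and, invoking the a.u.\ hypothesis, pick projections $\wt e_n\in\mc P(\wt{\mc M})$ with $\wt\tau(\wt e_n^\perp)\leq \ep_n$ and $\phi_n(t):=\|(\wt f-\wt f_t)\wt e_n\|_\ii\to 0$ as $t\to\ii$. Passing to fibers $\{e_n(z)\}$ of $\wt e_n$, Chebyshev applied to the bound $\int\tau(e_n(z)^\perp)\,d\nu(z)\leq 4^{-n}$ yields $\nu\{z:\tau(e_n(z)^\perp)>2^{-n}\}\leq 2^{-n}$, so Borel--Cantelli produces a $\nu$-null set $\Omega_0\su\mbb C_1$ such that for each $z\notin\Omega_0$ there is $N(z)\in\mbb N$ with $\tau(e_n(z)^\perp)\leq 2^{-n}$ for every $n\geq N(z)$.

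Second, since $\phi_n(t)=\operatorname{ess\,sup}_z\|(f(z)-f_t(z))e_n(z)\|_\ii$, for each pair $(n,t)$ the pointwise inequality $\|(f(z)-f_t(z))e_n(z)\|_\ii\leq \phi_n(t)$ holds off a $\nu$-null set. Fixing a countable unbounded subset $D\su\mbb R_+$ and taking the union $\Omega_1$ of these exceptional sets over all $n\in\mbb N$ and all $t\in D$ keeps $\Omega_1$ null. Then, for any $z\notin\Omega_0\cup\Omega_1$ and any $\ep>0$, I select $n\geq N(z)$ with $2^{-n}<\ep$ and set $e=e_n(z)$; the estimates $\tau(e^\perp)<\ep$ and $\|(f(z)-f_t(z))e\|_\ii\leq \phi_n(t)\to 0$ along $D$ deliver a.u.\ convergence at $z$.

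The main technical obstacle is translating the single essential-supremum bound $\phi_n(t)$ into pointwise-in-$z$ information simultaneously valid for every $n$, which is exactly what the summability choice and Borel--Cantelli afford. The remaining subtlety of upgrading convergence along the countable $D$ to the full continuous parameter $t\to\ii$ in $\mbb R_+$ is handled as in \cite{da, cls}: one picks $D$ as a sequence $t_k\uparrow\ii$ along which $\phi_n(t_k)\to 0$, and, where needed, uses continuity of $t\mapsto\wt f_t$ (inherited from the strong continuity of $\{T_t\}$ in $L^1$ in the intended applications) to interpolate between consecutive points of $D$.
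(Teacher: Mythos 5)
Your argument is correct and is essentially the proof the paper intends: Lemma 3.3 is justified there only by the citation of \cite{da} and \cite{cls}, whose content is precisely your combination of the identification of $\wt e_n$ with a measurable field $z\mapsto e_n(z)$, the Chebyshev--Borel--Cantelli selection giving $\tau(e_n(z)^\perp)\leq 2^{-n}$ for all large $n$ at $\nu$-a.e.\ $z$, and the essential-supremum formula for $\|\cdot\|_\ii$ in $\wt{\mc M}$. The only step to tighten is the passage from the countable set $D$ to the continuous parameter (the cited lemmas treat sequences, and the paper leaves this entirely implicit): $L^1$-continuity of $t\mapsto \wt f_t$ does not by itself control $\|(\wt f(z)-\wt f_t(z))e_n(z)\|_\ii$, so for $t\notin D$ you should take $D\ni t_k\to t$, use fiberwise convergence in measure of $\wt f_{t_k}(z)$ to $\wt f_t(z)$ (available in the intended application, where $t\mapsto\wt f_t(z)$ is $L^1$-continuous for every $z$), and transfer the bound $\|(\wt f(z)-\wt f_{t_k}(z))e_n(z)\|_\ii\leq\sup_{s\in D,\,s\ge T}\phi_n(s)$ to the limit via the closedness of norm-balls of $\mc M$ in the measure topology --- the same device the paper uses through \cite[Lemma 5.1]{cl1} in the proof of Theorem 3.1.
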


Let $\{T_t\}_{t\ge 0}\su DS^+$ be a strongly continuous semigroup in $L^1$.
Pick $\lb \in \mbb C_1$ and define
$$
T_t^{(\lb)}(\wt f)(z)=T_t(\wt f(\lb^tz)),\ \ \ t\ge 0,\ \ \wt f\in L^1(\wt{\mc M},\wt\tau)+\wt{\mc M}, \ z\in \mbb C_1.
$$
Then it is easily verified that $\{T_t^{(\lb)}\}_{t\ge 0}\su DS^+(\wt{\mc M},\wt\tau)$. Besides, given $t,s\ge 0$,
$\wt f\in L^1(\wt{\mc M},\wt\tau)+\wt{\mc M}$, and $z\in\mbb C_1$, we have
\begin{equation*}
\begin{split}
T_{t+s}^{(\lb)}(\wt f)(z)&=T_{t+s}(\wt f(\lb^{t+s}z))=T_t(T_s(\wt f(\lb^s\lb^tz)))=
T_t(T_s^{(\lb)}(\wt f)(\lb^tz))\\
&=(T_t^{(\lb)}T_s^{(\lb)})(\wt f)(z),
\end{split}
\end{equation*}
that is, $\{T_t^{(\lb)}\}_{t\ge 0}$ is a semigroup.

\begin{pro}\label{p10}
The semigropup  $\{T_t^{(\lb)}\}_{t\ge 0}$ is strongly continuous in $L^1(\wt M,\wt\tau)$.
\end{pro}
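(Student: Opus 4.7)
The plan is to verify strong $L^1$-continuity by writing the increment $T_t^{(\lb)}(\wt f) - T_{t_0}^{(\lb)}(\wt f)$ as a sum of two pieces that can be controlled separately. For $\wt f\in L^1(\wt{\mc M},\wt\tau)$ and $t,t_0\ge 0$, I would use
$$
T_t^{(\lb)}(\wt f)(z)-T_{t_0}^{(\lb)}(\wt f)(z)=T_t\bigl(\wt f(\lb^t z)-\wt f(\lb^{t_0}z)\bigr)+(T_t-T_{t_0})\bigl(\wt f(\lb^{t_0}z)\bigr),
$$
so that after integration against $\nu$ over $\mbb C_1$, the triangle inequality reduces the task to estimating two separate integrals.

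For the second piece, by strong continuity of $\{T_t\}$ in $L^1(\mc M,\tau)$ the integrand $\|(T_t-T_{t_0})(\wt f(\lb^{t_0}z))\|_1$ tends to $0$ as $t\to t_0$ for every $z\in \mbb C_1$, and is dominated by $2\|\wt f(\lb^{t_0}z)\|_1$, which is $\nu$-integrable because $\wt f\in \wt{L^1}$ and $\nu$ is rotation invariant. Dominated convergence then disposes of this term.

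For the first piece, the Dunford-Schwartz contraction property $\|T_t(y)\|_1\leq\|y\|_1$ and a change of variables $w=\lb^{t_0}z$ (again using rotation invariance of $\nu$) reduce matters to showing that
$$
\int_{\mbb C_1}\|\wt f(\al w)-\wt f(w)\|_1\, d\nu(w)\longrightarrow 0\qquad \text{as\ }\al=\lb^{t-t_0}\to 1.
$$
This is the statement that the rotation action $R_\al\wt f(w)=\wt f(\al w)$ is a strongly continuous group of isometries on $\wt{L^1}$. I would prove it by a standard density argument: approximate $\wt f$ in $\wt{L^1}$ by a simple function $\wt g=\sum_{i=1}^n\chi_{E_i}x_i$ with $E_i\su\mbb C_1$ Borel and $x_i\in L^1(\mc M,\tau)$, note that $\|R_\al\wt g-\wt g\|_1\leq\sum_i\|x_i\|_1\,\nu(\al^{-1}E_i\triangle E_i)$, and use the classical fact that $\nu(\al^{-1}E\triangle E)\to 0$ as $\al\to 1$ for every Borel $E\su\mbb C_1$.

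The main obstacle is precisely the strong continuity of the rotation action on the vector-valued space $\wt{L^1}$; once this is in hand the rest is just Dunford-Schwartz contractivity plus dominated convergence. I would expect no further surprises, since separability of the predual of $\mc M$ (assumed in this section) guarantees that simple $\wt g$'s are dense in $\wt{L^1}\simeq L^1(\wt{\mc M},\wt\tau)$.
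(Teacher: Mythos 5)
Your proposal is correct and follows essentially the same route as the paper: split the increment into a rotation part (controlled by Dunford--Schwartz contractivity plus strong continuity of the rotation action on $\wt{L^1}$, proved by density of simple functions) and a semigroup part (handled by pointwise strong continuity plus dominated convergence). The only cosmetic differences are that you argue directly at an arbitrary $t_0$ where the paper first treats $t_0=0$ and then invokes the semigroup property, and you cite the classical continuity of translations on $L^1(\mbb C_1)$ where the paper rederives it via arc-generated simple functions.
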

\begin{proof}
Let us show that if $0\leq s_n\to 0$ and $\wt f\in L^1(\wt{\mc M},\wt\tau)$, then
$$
\|T_{s_n}^{(\lb)}( \wt f)-\wt f\|_{L^1(\wt{\mc M},\wt\tau)} = \int_{\mbb C_1}\|T_{s_n}(\wt f(\lb^{s_n}z))-\wt f(z)\|_1 \, d \nu(z)\to 0 \ \ \text{as} \ \ n\to\ii.
$$
We have
\begin{equation*}
\begin{split}
\|T_{s_n}(\wt f(\lb^{s_n}z))-\wt f(z)\|_1&\leq \|T_{s_n}(\wt f(\lb^{s_n}z))-T_{s_n}(\wt f(z)) \|_1+
\|T_{s_n}(\wt f(z))-\wt f(z)\|_1\\
&\leq \|\wt f(\lb^{s_n}z)-\wt f(z)\|_1+\|T_{s_n}(\wt f(z))-\wt f(z)\|_1.
\end{split}
\end{equation*}
Since  $\{T_t\}_{t\ge 0}$ is strongly continuous in $L^1$, it follows that $\|T_{s_n}(\wt f(z))-\wt f(z)\|_1\to 0$ for $\nu$-almost all $z \in \mbb C_1$. Besides,
$$
\|T_{s_n}(\wt f(z))-\wt f(z)\|_1 \leq 2 \|\wt f(z)\|_1 \ \ \ \nu-\text{a.e.},
$$
where $\|\wt f(z)\|_1\in L^1(\mbb C_1,\nu)$, which allows us to conclude that
$$
\int_{\mbb C_1}\|T_{s_n}(\wt f(z))-\wt f(z)\|_1 \ d \nu(z) \to 0 \ \ \text{as} \ \  n \to \ii.
$$
Thus, it remains to verify that
$$
\int_{\mbb C_1}\|\wt f(\lb^{s_n}z))-\wt f(z)\|_1\, d \nu(z) \to 0 \ \ \text{as} \ \  n \to \ii.
$$
Let
\begin{equation}\label{e220}
\wt h(z)=\sum\limits_{i=1}^m x_i \chi_{[\lb_i,\lb_{i+1})}(z), \ z \in \mbb C_1, \ x_i \in L^1, \ i=1,\dots,m
\end{equation}
be a simple Bochner measurable function, where $\{\lb_i\}_{i=1}^m$ is a partition of the circle $\mbb C_1$. Then, given
$s\in\mbb R$, we have
$$
\wt h(\lb^sz)=\sum\limits_{i=1}^m x_i \chi_{[\lb^s\lb_i,\lb^s\lb_{i+1})}(z),
$$
where $\{\lb^s\lb_i\}_{i=1}^n$ is another partition of $\mbb C_1$.
Since $s_n \to 0$, it follows that $\lb^{s_n}\lb_i\to \lb_i$ as $n\to\ii$ for all $i=1,\dots,m$, implying that
\begin{equation}\label{e221}
\int_{\mbb C_1}\|\wt h(\lb^{s_n}z)-\wt h(z)\|_1\, d \nu(z) \to 0 \ \ \text{as} \ \  n\to\ii.
\end{equation}

Let $\mc A$ be the subalgebra of the $\sigma$-algebra of Lebesgue measurable sets of $(\mbb C_1,\nu)$ generated by the arcs of the circle $\mbb C_1$. Since any $A\in\mc A$ is  a finite union of pairwise disjoint arcs of $\mbb C_1$, a simple Bochner measurable function
\begin{equation}\label{e222}
\wt h(z)=\sum\limits_{i=1}^m x_i\chi_{A_i}(z), \ A_i\in\mc A,  \ i=1,\dots,m
\end{equation}
has the form (\ref{e220}) for some partition $\{\lb_i\}_{i=1}^l$ of $\mbb C_1$.
Therefore, (\ref{e221}) holds for any simple Bochner measurable function $\wt h$ of the form (\ref{e222}).

Since the $\sigma$-algebra generated by the subalgebra $\mc A$ coincides $\nu$-almost everywhere with the $\sigma$-algebra $\Sigma$ of Lebesgue measurable sets in $(\mbb C_1,\nu)$, given an arbitrary simple Bochner measurable  function
$\wt g(z)=\sum_{i=1}^m x_i\chi_{A_i}(z)$, $A_i\in\Sigma$, $i =1,\dots,m$, there exists a sequence
$\{\wt h_k(z)\}_{k=1}^\ii$ of simple Bochner measurable functions of the form (\ref{e222}) such that
$$
\int_{\mbb C_1}\|\wt g(z)-\wt h_k(z)\|_1\, d \nu(z) \to 0 \ \ \text{as} \ \  k\to\ii.
$$
Therefore
\begin{equation*}
\begin{split}
\int_{\mbb C_1}\|\wt g(\lb^{s_n}z)&-\wt g(z)\|_1\,d \nu(z)
\leq\int_{\mbb C_1}\|\wt g(\lb^{s_n}z)-\wt h_k(\lb^{s_n}z)\|_1\,d \nu(z)\\
&+\int_{\mbb C_1}\|\wt h_k(\lb^{s_n}z)-\wt h_k(z)\|_1\,d \nu(z)+
\int_{\mbb C_1}\|\wt h_k(z)-\wt g(z)\|_1\,d \nu(z)\\
&=2\int_{\mbb C_1}\|\wt h_k(z)-\wt g(z)\|_1\,d \nu(z)+\int_{\mbb C_1}\|\wt h_k(\lb^{s_n}z)-\wt h_k(z)\|_1\,d \nu(z)
\end{split}
\end{equation*}
implies that (\ref{e221}) holds for any simple Bochner measurable function $\wt g$.

As $\wt f\in L^1(\wt{\mc M},\wt\tau)$, there exists a sequence $\{\wt g_k(z)\}_{k=1}^\ii$ of simple Bochner measurable  functions for which
$$
\int_{\mbb C_1}\|\wt f(z)-\wt g_k(z)\|_1\,d\nu(z) \to 0\text{ \ \ as\ \ } k\to \ii.
$$
Then, repeating the previous argument, we conclude that the convergence in (\ref{e221}) holds for $\wt f$. Therefore, it now follows that
$$
\|T_{s_n}^{(\lb)}(\wt f)-\wt f\|_{L^1(\wt{\mc M},\wt\tau)}\to 0 \text{ \ \ as\ \ } n \to \ii.
$$

Finally, let $t \geq 0$, $t_n > 0$, $t_n \downarrow t$ and denote $s_n=t_n - t$. Then we have
\begin{equation*}
\begin{split}
\|T_{t_n}^{(\lb)}(\wt f)-T_{t}^{(\lb)}(\wt f)\|_{L^1(\wt{\mc M},\wt\tau)}&=
\|T_{t+s_n}^{(\lb)}(\wt f)-T_{t}^{(\lb)}(\wt f)\|_{L^1(\wt{\mc M},\wt\tau)}\\
&\leq\|T_{s_n}^{(\lb)}(\wt f)-\wt f\|_{L^1(\wt{\mc M},\wt\tau)} \to 0
\end{split}
\end{equation*}
as $n\to\ii$. The case $t_n \uparrow t>0$ is similar.
\end{proof}

\begin{lm}\label{l22}
Let $\{T_t\}_{t\ge 0}\su DS^+$ be a strongly continuous semigroup in $L^1$, and let $p(t)=\sum\limits_{j=1}^nw_j\lb_j^t$ be a trigonometric polynomial. If $x\in L^1$ and
$$
P_t(x)=\frac 1t\int_0^tp(s)T_s(x)ds,
$$
then
\begin{enumerate}[(i)]
\item
the averages $P_t(x)$ converge a.u. as $t\to \ii$;
\item
the averages $P_t(x)$ converge a.u. to $p(0)x$ as $t\to 0$.
\end{enumerate}
\end{lm}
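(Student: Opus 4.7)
The plan is to reduce both parts to unmodulated a.u.\ ergodic theorems for the auxiliary semigroup $\{T_t^{(\lb)}\}$ on $(\wt{\mc M},\wt\tau)$ constructed just before Proposition \ref{p10}, via the circle-crossing device. By linearity of $P_t(x)$ in $p$, it suffices to treat a single exponential $p(t)=\lb^t$ with $\lb\in\mbb C_1$; write
\[
Q_t^{(\lb)}(x)=\frac 1t\int_0^t\lb^sT_s(x)\,ds,
\]
so that $P_t(x)=\sum_{j=1}^nw_jQ_t^{(\lb_j)}(x)$. For $x\in L^1$, define $\wt f_x\in\wt{L^1}$ by $\wt f_x(z)=zx$, $z\in\mbb C_1$; then $\|\wt f_x\|_{\wt{L^1}}=\|x\|_1$, and linearity of $T_t$ gives
\[
T_t^{(\lb)}(\wt f_x)(z)=T_t(\lb^tzx)=\lb^tz\,T_t(x),
\]
hence
\[
\wt A_t(\wt f_x)(z):=\frac 1t\int_0^tT_s^{(\lb)}(\wt f_x)(z)\,ds=z\,Q_t^{(\lb)}(x).
\]

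For (i), I would invoke the continuous-time a.u.\ individual ergodic theorem for positive Dunford--Schwartz semigroups, applied to $\{T_t^{(\lb)}\}_{t\ge 0}\su DS^+(\wt{\mc M},\wt\tau)$ and the vector $\wt f_x\in\wt{L^1}$; this yields a.u.\ convergence of $\wt A_t(\wt f_x)$ in $\wt{L^1}$ as $t\to\ii$. Lemma \ref{l21} then produces some $z_0\in\mbb C_1$ for which $z_0\,Q_t^{(\lb)}(x)$ converges a.u.\ in $L^0(\mc M,\tau)$; since $|\ol{z_0}|=1$, left multiplication by $\ol{z_0}$ preserves a.u.\ convergence, so $Q_t^{(\lb)}(x)$ itself converges a.u. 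Summing in $j$ yields (i).

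For (ii) the scheme is identical, with the local (Wiener-type) a.u.\ ergodic theorem replacing the global one: strong continuity of $\{T_t^{(\lb)}\}$ on $\wt{L^1}$ (Proposition \ref{p10}) gives $\wt A_t(\wt f_x)\to\wt f_x$ a.u.\ in $\wt{L^1}$ as $t\to 0^+$. The proof of Lemma \ref{l21} is insensitive to the direction of the limit parameter, so for $\nu$-a.e.\ $z$ one obtains $z\,Q_t^{(\lb)}(x)\to zx$ a.u., whence $Q_t^{(\lb)}(x)\to x$ a.u. Taking the linear combination yields $P_t(x)\to\sum_jw_j\,x=p(0)x$ a.u.

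The main obstacle is that both parts rest on unmodulated continuous-time a.u.\ ergodic theorems, which are not stated up to this point of the manuscript. The global version should follow from Yeadon's discrete a.u.\ result applied to $T_1^{(\lb)}$ (using the identity $A_n=\frac 1n\sum_{k=0}^{n-1}T_1^k(y)$ with $y=\int_0^1T_s(x)ds$ from the proof of Theorem \ref{t12}), combined with the continuous maximal inequality of Theorem \ref{t12} to bridge from integer to real $t$, together with an approximation on a dense subset. The local version requires a Wiener-type maximal bound near $0$ plus convergence on the dense subset of ``smoothed'' vectors $T_r(y)$ with $r>0$, for which $T_s(T_r(y))\to T_r(y)$ in $L^1$ as $s\to 0^+$ is a direct consequence of strong continuity. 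Setting up these two auxiliary ingredients carefully is where the technical work concentrates; once in hand, the circle-crossing trick handles the modulation $\lb^s$ cleanly.
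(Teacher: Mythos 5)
Your proposal follows essentially the same route as the paper: the circle-crossing semigroup $T_t^{(\lb)}$ on $(\wt{\mc M},\wt\tau)$, the vector $\wt f(z)=zx$, Lemma \ref{l21} to descend to a single $z\neq 0$, and linearity over the exponentials. The two unmodulated continuous-time a.u.\ ergodic theorems you flag as the missing ingredients are exactly what the paper imports by citation (the global one from \cite[Corollary 5.2]{cl1} and the local one from \cite[Theorem 5.1]{cl2}), so no further construction is needed there.
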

\begin{proof}

(i) Fix  $\lb \in \mbb C_1$ and let
$$
T_t^{(\lb)}(\wt f)(z)=T_t(\wt f(\lb^tz)),\ \ \ t\ge 0,\ \ \wt f\in L^1(\wt{\mc M},\wt\tau)+\wt{\mc M}, \ z\in \mbb C_1.
$$
In view of Proposition \ref{p10}, $\{T_t^{(\lb)}\}_{t\ge 0}\su DS^+(\wt{\mc M},\wt\tau)$ is a strongly continuous semigroup on $L^1(\wt M,\wt \tau)$. Then, by \cite[Corollary 5.2]{cl1},
given $\wt f\in  L^1(\wt{\mc M},\wt\tau)$, the averages
\begin{equation}\label{e0}
\frac 1t\int_0^tT_s^{(\lb)}(\wt f)ds
\end{equation}
converge a.u. as $t\to \ii$. Therefore, by Lemma \ref{l21}, the averages
$$
\frac 1t\int_0^tT_s^{(\lb)}(\wt f)(z)ds=\frac 1t\int_0^tT_s(\wt f(\lb^sz))ds
$$
converge a.u. as $t\to \ii$ for $\nu$-almost all $z\in \mbb C_1$. In particular, letting $\wt f(z)=zx$, we conclude
that the averages
$$
z\,\frac 1t\int_0^t\lb^sT_s(x)ds
$$
converge a.u. as $t\to\ii$ for some $0\neq z\in \mbb C_1$, implying that the averages
$$
\frac 1t\int_0^t\lb^sT_s(x)ds
$$
converge a.u. as $t\to\ii$. Therefore, by linearity, the averages $P_t(x)$ converge a.u. as $t\to\ii$.

(ii) Now, by \cite[Theorem 5.1]{cl2}, if $\wt f\in  L^1(\wt{\mc M},\wt\tau)$, it follows that the averages (\ref{e0})
converge a.u. to $\wt f$ as $t\to 0$. Then, letting $\wt f(z)=zx$, we see as above that
$$
\frac 1t\int_0^t\lb^sT_s(x)ds\to x \text{ \ a.u.}
$$
as $t\to 0$, and the result follows by linearity.
\end{proof}

Here is a Besicovitch-weighted noncommutative individual ergodic theorem for flows in $L^1$ generated by $L^1$-strongly continuous semigroups of positive Dunford-Schwartz operators:
\begin{teo}\label{t21}
Let $\{T_t\}_{t\ge 0}\su DS^+$ be a strongly continuous semigroup in $L^1$, and let $\bt(t)$ be a bounded Besicovitch function with $\|\bt\|_\ii<C<\ii$. Then, given $x\in L^1$, the averages (\ref{e22}) converge a.u. to some $\wh x\in L^1$ as $t\to \ii$.
\end{teo}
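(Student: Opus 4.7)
Plan. I would approximate $\bt$ by trigonometric polynomials, apply Lemma~\ref{l22}(i) to the approximants, and control the approximation error almost uniformly using the weighted maximal inequality of Lemma~\ref{l23} in combination with the one-sided upgrade of Lemma~\ref{l24}.

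For each $k\in\mbb N$, choose a trigonometric polynomial $p_k$ with $\limsup_{t\to\ii}\frac{1}{t}\int_0^t|\bt(s)-p_k(s)|\,ds<2^{-k}$. By Lemma~\ref{l22}(i), the averages $P^{p_k}_t(x):=\frac{1}{t}\int_0^t p_k(s)T_s(x)\,ds$ converge a.u.\ (and hence in $L^1$) to some $\wh x_k\in L^1$. Passing to the $L^1$-limit in $\|P^{p_k}_t(x)-P^{p_{k'}}_t(x)\|_1\le\|x\|_1\cdot\frac{1}{t}\int_0^t|p_k-p_{k'}|\,ds$ yields $\|\wh x_k-\wh x_{k'}\|_1\le(2^{-k}+2^{-k'})\|x\|_1$, so $\{\wh x_k\}$ is Cauchy in $L^1$ with some limit $\wh x\in L^1$; this is my candidate for the a.u.\ limit.

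To prove $B_t(x)\to\wh x$ a.u., I decompose
\[
B_t(x)-\wh x=\bigl[P^{p_k}_t(x)-\wh x_k\bigr]+\bigl[\wh x_k-\wh x\bigr]+R^{(k)}_t(x),\quad R^{(k)}_t(x):=\tfrac{1}{t}\int_0^t(\bt(s)-p_k(s))T_s(x)\,ds,
\]
and control each summand on a projection $e=\bigwedge_k(f_k^{(1)}\wedge f_k^{(2)}\wedge f_k^{(3)})$ of small deficit. The first bracket tends to $0$ a.u.\ by Lemma~\ref{l22}, producing $f_k^{(1)}$. For the middle bracket, $\|\wh x_k-\wh x\|_1$ is summable in $k$, so a spectral cut-off of $|\wh x_k-\wh x|$ delivers $f_k^{(2)}$ with small $\tau$-deficit and $\|(\wh x_k-\wh x)f_k^{(2)}\|_\ii\to 0$ as $k\to\ii$.

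The real obstacle is the third summand. Although $\|R^{(k)}_t(x)\|_1\le 2\cdot 2^{-k}\|x\|_1$ for $t\ge T_k$ (by the Besicovitch estimate), I need a projection $f_k^{(3)}$ \emph{independent of $t$} with $\tau((f_k^{(3)})^\perp)$ small and $\sup_{t\ge T_k}\|R^{(k)}_t(x)f_k^{(3)}\|_\ii\to 0$ as $k\to\ii$ -- a one-sided almost uniform maximal bound. Lemma~\ref{l23} only furnishes the weaker two-sided analog $\sup_t\|f R^{(k)}_t(x)f\|_\ii\le 48\|\bt-p_k\|_\ii\eta$ with $\tau(f^\perp)\le 4\|x\|_1/\eta$, which is insufficient. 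To upgrade to the one-sided form I would discretize $t$ along $t_n\uparrow\ii$ so that $\{R^{(k)}_{t_n}\}_n$ is b.u.e.m.\ (by Lemma~\ref{l23}), apply Lemma~\ref{l24} to an appropriate $L^1$-vanishing sequence built from $x$ to obtain the one-sided control on the discretized family, and then extend to continuous $t$ by the rational-approximation and a.u.-along-a-subsequence argument used in the proof of Theorem~\ref{t12}. Assembling the three bounds delivers $\|(B_t(x)-\wh x)e\|_\ii\to 0$ as $t\to\ii$, which is the required almost uniform convergence.
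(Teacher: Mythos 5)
Your overall strategy (trigonometric-polynomial approximation via Lemma \ref{l22}, with the maximal machinery of Lemmas \ref{l23} and \ref{l24} absorbing the error) draws on the right ingredients, and you have correctly located the crux in the remainder $R^{(k)}_t(x)$; but the mechanism you propose for that term does not close. Lemma \ref{l23} applied to the weight $\bt-p_k$ gives $\tau(f^\perp)\leq 4\|x\|_1/\eta$ together with $\sup_t\|fR^{(k)}_t(x)f\|_\ii\leq 48\|\bt-p_k\|_\ii\,\eta$, and $\|\bt-p_k\|_\ii$ is \emph{not} small --- only the Ces\`aro mean of $|\bt-p_k|$ is small, while its sup norm is of order $C+\|p_k\|_\ii$ --- so shrinking the right-hand side forces $\eta\to 0$ and blows up $\tau(f^\perp)$ for the fixed $x$. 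This is a weak-type $(1,1)$ estimate: it helps only when the $L^1$ norm of the \emph{input} is small, and the smallness of $\|R^{(k)}_t(x)\|_1$ (the output) for $t\ge T_k$ does not convert into a $t$-uniform bound on a common projection. Your proposed upgrade via Lemma \ref{l24} has the same defect: its conclusion concerns only \emph{some} member $x_{m_0}$ of an $L^1$-vanishing sequence, never $x$ itself, so ``applying it to a vanishing sequence built from $x$'' controls $R^{(k)}_{t_n}$ on a small tail of $x$ and leaves the bounded bulk of $x$ untouched. The bound $\sup_{t\ge T_k}\|R^{(k)}_t(x)f^{(3)}_k\|_\ii\to 0$ for the full $x\in L^1$ is therefore not obtainable from these lemmas.

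The repair is to separate the two regimes \emph{before} introducing any three-term decomposition, which is what the paper does. For $x\in L^1\cap\mc M$ no projection is needed at all: $\|B_t(x)-P_t(x)\|_\ii\le\frac1t\int_0^t|\bt(s)-p_\ep(s)|\,ds\,\|x\|_\ii<\ep\,\|x\|_\ii$ for all large $t$, so $\{B_t(x)\}$ is a.u.\ Cauchy directly from Lemma \ref{l22}(i). For general $x\in L^1_+$ one writes $x=y_m+x_m$ with $y_m$ a spectral truncation and $\|x_m\|_1\to 0$, uses Lemma \ref{l23} to see that $\{B_{t_n}\}$ along a dense sequence of rationals is b.u.e.m.\ at zero, applies Lemma \ref{l24} to obtain the one-sided bound $\sup_n\|B_{t_n}(x_{m_0})e\|_\ii\le\dt/3$ for a suitable tail $x_{m_0}$, extends from $\{t_n\}$ to all $t>0$ by the in-measure/a.u.-subsequence argument you cite from Theorem \ref{t12}, and combines this with the a.u.\ Cauchy property of $B_t(y_{m_0})$. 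The limit then exists by completeness of $L^0$ under a.u.\ convergence and lies in $L^1$ because the unit ball of $L^1$ is closed in the measure topology --- the same fact, incidentally, that is needed to justify your step ``$P^{p_k}_t(x)$ converges a.u.\ and hence in $L^1$,'' which is not automatic as stated.
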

\begin{proof}
Assume first that $x\in L^1\cap \mc M$. Fix $\ep>0$ and choose a trigonometric polynomial $p=p_\ep$ to satisfy
condition (\ref{e21}). Let $\{P_t(x)\}_{t>0}$ be the corresponding averages from Lemma \ref{l22}. Then we have
$$
\| B_t(x)-P_t(x)\|_\ii\leq\frac 1t\int_0^t|\bt(s)-p(s)|ds\, \|x\|_\ii<\ep\,\|x\|_\ii
$$
for all big enough values of $t$. Since, by Lemma \ref{l22}, the averages $P_t(x)$ converge a.u.,
it follows that the net $\{B_t(x)\}_{t>0}$ is a.u. Cauchy as $t\to\ii$.

Now, let $x\in L^1$. Without loss of generality, assume that $x\in L^1_+$, and let $\{e_\lb\}$ be the spectral family
of $x$. Given $m\in \mbb N$, if we define $y_m=\int_0^m\lb de_\lb$ and $x_m=x-y_m$, then
$\{y_m\}\su L^1_+\cap \mc M$, $\{x_m\}\su L^1_+$ and $\|x_m\|_1\to 0$.

Fix $\ep>0$ and $\dt>0$. If $\{t_n\}$ is a sequence of positive rational numbers which is dense in $(0,\ii)$, then, by Lemma \ref{l23}, the sequence $\{B_{t_n}\}$ is b.u.e.m. at zero on $L^1_+$, hence on $L^1$ (see \cite[Lemma 4.1]{li}). Applying Lemma \ref{l24}, we find a projection $e\in \mc P(\mc M)$ and $x_{m_0}\in \{x_m\}$ such that
$$
\tau(e^\perp)\leq \frac \ep2 \text{ \ \ and \ \ } \sup_n\|B_{t_n}(x_{m_0})e\|_\ii \leq\frac \dt3.
$$

If $t>0$, then $t_{n_k}\to t$ for a subsequence $\{t_{n_k}\}$, and it easily verified that
$$
\|B_t(x_{m_0})-B_{t_{n_k}}(x_{m_0})\|_1\to 0.
$$
Therefore $B_{t_{n_k}}(x_{m_0})\to B_t(x_{m_0})$ in measure, which implies that there is a subsequence
$\{t_{n_{k_l}}\}$ such that $B_{t_{n_{k_l}}}(x_{m_0})\to B_t(x_{m_0})$ a.u.

Since $\|B_{t_{n_{k_l}}}(x_{m_0})e\|_\ii<\dt/3$ for each $l$, it follows from \cite[Lemma 5.1]{cl1} that
\begin{equation}\label{eq2}
\sup_{t>0}\|B_t(x_{m_0})e\|_\ii \leq \frac \dt 3.
\end{equation}
Because $y_{m_0}\in L^1 \cap \mc M$, the net $B_{t}(y_{m_0})$ is a.u. Cauchy as $t\to \ii$.
Therefore, there exist $g \in \mc P(\mc M)$ and $t_0 > 0$ such that
\begin{equation}\label{eq3}
\tau(g^\perp)\leq \frac\ep 2 \text { \ \ and \ \ }
\|(B_t( y_{m_0})- B_{t'}( y_{m_0}))g\|_\ii \leq \frac \dt3.
\end{equation}
for all $t, t' \ge t_0$.

If $h = e \wedge g$, then $\tau(h^\perp)\leq \ep $ and, in view of (\ref{eq2})  and (\ref{eq3}), we have
\begin{equation*}
\begin{split}
\| (B_t(x)- B_{t'}(x))h\|_\ii &\leq \| (B_t(y_{m_0})- B_{t'}( y_{m_0}))h\|_\ii \\
&+\| B_t(x_{m_0})h\|_\ii + \| B_{t'}( x_{m_0})h\|_\ii \leq \delta
\end{split}
\end{equation*}
for all $t, t' \ge t_0$.
Thus, the net $\{B_{t}(x)\}$ is a.u. Cauchy as $t \to \ii$. Since $L^0$ is complete with respect to a.u.
convergence (see proof of \cite[Theorem 2.3]{cls}), there is $\wh x\in L^0$ such that $B_t(x)\to \wh x$ a.u.
In particular, $B_t(x)\to \wh x$ in measure. Since $|\bt(t)|<C<\ii$ for all $t\ge 0$, each map $C^{-1}B_t$ is a contraction in $L^1$, which, because the unit ball of $L^1$ is closed in measure topology, implies that
$\wh x\in L^1$.
\end{proof}

The following theorem is a local ergodic theorem in $L^1$ for $L^1$-continuous semigroups of positive Dunford-Schwartz operators modulated by bounded zero-Besicovitch functions.

\begin{teo}\label{t22}
Let $\{T_t\}_{t\ge 0}\su DS^+$ be a strongly continuous semigroup in $L^1$, and let $\bt(t)$  be a zero-bounded Besicovitch function with $\|\bt\|_\ii<C<\ii$. Then, given $x\in L^1$, the averages (\ref{e22})
converge a.u. to $\alpha(x)\,x$ as $t\to 0$ for some $\alpha(x) \in \mbb C$.
\end{teo}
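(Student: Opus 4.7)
My plan is to follow the structure of the proof of Theorem~\ref{t21} with two substitutions adapted to the ``$t\to 0$'' regime: the Besicovitch approximation \eqref{e21} is replaced by its zero-Besicovitch analogue \eqref{e21z}, and Lemma~\ref{l22}(i) is replaced by Lemma~\ref{l22}(ii). The only genuinely new ingredient compared with Theorem~\ref{t21} is the identification of the a.u.\ limit as a scalar multiple of $x$, which I intend to carry out through an independent $L^1$-convergence argument.

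I will first treat $x\in L^1\cap\mc M$. Given $\ep>0$, I pick a trigonometric polynomial $p_\ep$ satisfying \eqref{e21z}, so that $\|B_t(x)-P_t(x)\|_\ii\leq \ep\|x\|_\ii$ for all sufficiently small $t>0$, where $P_t(x)=\frac1t\int_0^t p_\ep(s)T_s(x)\,ds$. Since Lemma~\ref{l22}(ii) gives $P_t(x)\to p_\ep(0)x$ a.u.\ as $t\to 0$, the net $\{P_t(x)\}$ is a.u.\ Cauchy, and a diagonal refinement over a sequence $\ep_n\downarrow 0$ (intersecting the corresponding projections, as in the proof of Theorem~\ref{t21}) shows that $\{B_t(x)\}$ is itself a.u.\ Cauchy as $t\to 0$. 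For general $x\in L^1$ I plan to repeat verbatim the truncation argument from Theorem~\ref{t21}: Lemma~\ref{l23} shows $\{B_t\}$ is b.u.e.m.\ at zero; Lemma~\ref{l24} applied to a decomposition $x=y_m+x_m$ with $y_m\in L^1\cap\mc M$ and $\|x_m\|_1\to 0$ produces a projection $e$ bounding $\sup_{t>0}\|B_t(x_{m_0})e\|_\ii$; combining this with the a.u.\ Cauchy property of $\{B_t(y_{m_0})\}$ just established, I obtain that $\{B_t(x)\}$ is a.u.\ Cauchy as $t\to 0$, and a.u.-completeness of $L^0$ together with the $L^1$-contractivity of $C^{-1}B_t$ yield $\wh x\in L^1$ with $B_t(x)\to\wh x$ a.u.

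It remains to identify $\wh x$. Strong $L^1$-continuity of $\{T_s\}$ and continuity of $p_\ep$ at $0$ imply that $P_t(x)\to p_\ep(0)\,x$ in $L^1$ as $t\to 0$; combined with the $L^1$-form $\|B_t(x)-P_t(x)\|_1\leq\ep\|x\|_1$ of the above approximation (valid for small $t$), this gives $\limsup_{t\to 0}\|B_t(x)-p_\ep(0)x\|_1\leq\ep\|x\|_1$. Applied to two parameters $\ep_1,\ep_2$ and a fixed nonzero test element, the triangle inequality yields $|p_{\ep_1}(0)-p_{\ep_2}(0)|\leq\ep_1+\ep_2$, so the scalars $p_\ep(0)$ form a Cauchy net with limit $\alpha=\alpha(\bt)\in\mbb C$ (depending only on $\bt$, not on $x$). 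Passing to the limit in $\ep$ yields $B_t(x)\to\alpha\,x$ in $L^1$, hence in measure, as $t\to 0$, and uniqueness of the in-measure limit forces $\wh x=\alpha\,x$, completing the proof. The main subtlety, absent from Theorem~\ref{t21}, is precisely this identification step: the scalar $\alpha$ has to be extracted from the trigonometric-polynomial approximations themselves, rather than being read off from the dynamics of the semigroup, and getting this right requires running a parallel $L^1$-convergence argument alongside the a.u.\ Cauchy analysis.
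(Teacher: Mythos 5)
Your proposal is correct and its overall architecture coincides with the paper's: the a.u.\ Cauchy argument for $x\in L^1\cap\mc M$ via (\ref{e21z}) and Lemma \ref{l22}(ii), followed by the truncation machinery borrowed from the proof of Theorem \ref{t21} (Lemmas \ref{l23}, \ref{l24} and a.u.-completeness of $L^0$), is exactly what the paper does. The two arguments part ways only at the identification of the limit. The paper takes polynomials $p_n$ satisfying (\ref{e21z}) with $\ep=1/n$, uses Lemma \ref{l22}(ii) to get $B_t(x)-P^{(n)}_t(x)\to\wh x-p_n(0)\,x$ a.u., hence in measure, and then invokes the closedness of the unit ball of $L^1$ in the measure topology together with (\ref{eq4}) to obtain $\|\wh x-p_n(0)\,x\|_1\leq\|x\|_1/n$; it then extends the identification from truncations $x\,e_k$ to general positive $x$ and finally to all of $L^1$ by linearity. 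You instead note that $P_t(x)\to p_\ep(0)\,x$ in $L^1$-norm directly from strong continuity of the semigroup, deduce $\limsup_{t\to 0}\|B_t(x)-p_\ep(0)\,x\|_1\leq\ep\,\|x\|_1$, extract $\al=\lim_{\ep\to 0}p_\ep(0)$ from the resulting Cauchy estimate $|p_{\ep_1}(0)-p_{\ep_2}(0)|\leq\ep_1+\ep_2$, and match the $L^1$-limit $\al\,x$ with the a.u.\ limit by uniqueness of limits in measure. Both routes are sound and produce the same scalar; yours is marginally more elementary (it needs neither the measure-closedness of the $L^1$-ball nor Lemma \ref{l22}(ii) in the identification step itself) and makes the independence of $\al$ from $x$ immediate, which renders the paper's extra truncation paragraphs unnecessary. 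The one implicit assumption you share with the paper is $T_0=\mathrm{id}$, without which one would only get $P_t(x)\to p_\ep(0)\,T_0(x)$ as $t\to 0$.
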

\begin{proof}
Assume first that $x\in L^1\cap\mc M$. Fix $\ep>0$ and choose a trigonometric polynomial $p=p_\ep$ to satisfy condition (\ref{e21z}). If $\{P_t(x)\}_{t>0}$ are the  averages from Lemma \ref{l22}, then
$$
\| B_t(x)-P_t(x)\|_\ii\leq\frac 1t\int_0^t|\bt(s)-p(s)|ds\, \|x\|_\ii<\ep\,\|x\|_\ii
$$
for all small  enough values of $t$. Since, by Lemma \ref{l22}, the averages $P_t(x)$ converge a.u. as $t\to 0$,
it follows that the net $\{B_t(x)\}_{t>0}$ is a.u. Cauchy as $t\to 0$. Then, repeating the proof of the Theorem \ref{t21}, we obtain that for any $x \in L^1$ the averages $\{B_t(x)\}_{t>0}$ converge a.u. to some $\wh x\in L^1$ as $t\to 0$.

Let $p_n$ be a trigonometric polynomial to satisfy condition (\ref{e21z}) with $\ep=1/n$. If $\{P^{(n)}_t(x)\}_{t>0}$ are the corresponding averages from Lemma \ref{l22}, then there is $t_n>0$ such that
\begin{equation}\label{eq4}
\| B_t(x)-P^{(n)}_t(x)\|_1\leq\frac 1t\int_0^t|\bt(s)-p(s)|ds\, \|x\|_1<\frac{\|x\|_1}n
\end{equation}
for all  $0<t<t_n$.

By Lemma \ref{l22}, $P^{(n)}_t(x)\to p_n(0)\,x$ a.u., hence $B_t(x)-P^{(n)}_t(x)\to \wh x-p_n(0)\,x$ a.u., implying that
$$
B_t(x) -P^{(n)}_t(x)\to\wh x-p_n(0)\,x \text{ \ \ in measure as\ \ } t\to 0.
$$
Since the unit ball of $L^1$ is closed in measure topology, (\ref{eq4}) entails that
$$
\|\wh x - p_n(0)\,x\|_1\leq\frac{\|x\|_1}n,
$$
hence $\wh x=\|\cdot\|_1-\lim\limits_{n\to\ii}p_n(0)\,x$, and we conclude that $\wh x=\al(x)\,x$ for some $\al(x)\in\mbb C$.

Now, let $0\leq x\in L^1$, $x\neq 0$, $e_n=\{x\leq n\}$, $n\in\mbb N$, and let $x_n= x\,e_n$.  It is clear that
$\{x_n\}\su L^1\cap\mc M$ and $\|x-x_n\|_1 \to 0$ as $n\to\ii$.  As shown above, $B_t(x)\to\wh x$ a.u. as $t\to 0$ for some
$\wh x\in L^1$ and $B_t(x_n)\to\al(x_n)\,x_n$ as $t\to 0$ for for every $n$ and some $\al(x_n)\in\mbb C$. Consequently,
$B_t(x)-B_t(x_n)\to\wh x-\al(x_n)\, x_n$ in measure.

Besides,
$$
\|B_t(x)-B_t(x_n)\|_1\leq\frac 1t\int_0^t|\bt(s)|ds\, \|x-x_n\|_1 \leq C\,\|x-x_n\|_1.
$$
Since the unit ball of $L^1$ is closed in measure topology, it follows  that
\begin{equation}\label{eq41}
\|\wh x-\al(x_n)\,x_n\|_1\leq C\,\|x-x_n\|_1 \to 0 \ \ \text{as} \ n \to \infty.
\end{equation}
Choose $k \in \mbb N$ such that $x_k=x\,e_k\neq 0$. If $n > k$, then we have
$$
\|\wh x\,e_k-\al(x_n)\,x_k\|_1=\|\wh x\,e_k-\al(x_n)\,x_n\,e_k\|_1\leq\|\wh x-\al(x_n)\,x_n\|_1,
$$
so, (\ref{eq41}) implies that
$$
\wh x\,e_k=\|\cdot\|_1-\lim\limits_{n\to\ii}\al(x_n)\,x_k.
$$
Therefore, there exists $\lim\limits_{n\to\ii}\al(x_n)=\al(x)$, implying that $\|\al(x_n)\,x_n-\al(x)\,x\|_1\to 0$ as $n\to\ii$, hence $\wh x=\al(x)\,x$, in view of (\ref{eq41}).

If $x\in L^1$, we employ the decomposition $x=x_1-x_2+i(x_3-x_4)$, $0\leq x_i\in L^1$, $i=1,2,3,4$, and apply the above argument to each $x_i$.
\end{proof}

A weaker version of Theorem \ref{t22} - for b.a.u. convergence and with no identification of the limit - was announced in \cite{mk}. However, some steps leading to the main Theorem 3.1 of the paper, such as the fact given above in Proposition \ref{p10}, were left unjustified.

\section{Extension to noncommutative fully symmetric spaces}
Now we will extend the results of Section 3 to the noncommutative fully symmetric spaces $E\su L^1+L^\ii$ with
$\mb 1\notin E$, in particular, to the spaces $L^p$, $1<p<\ii$.

Let $x\in L^0$, and let $\{ e_{\lb}\}_{\lb\ge 0}$ be the spectral family of projections for the absolute value
$| x|=(x^*x)^{1/2}$ of $x$. If $t>0$, then the {\it $t$-th generalized singular number} of $x$ (the {\it non-increasing rearrangement} of $x$) is defined as
$$
\mu_t(x)=\inf\{\lb>0:\ \tau(e_{\lb}^{\perp})\leq t\}
$$
(see \cite{fk}).

Let $\mu$ be the Lebesgue measure on $(0,\ii)$. It is well known that
$$
L^p=L^p(\mc M,\tau) =\Big\{x\in L^0: \int_0^\ii\mu_t(x)^p d\mu(t) < \ii\Big\},
$$
and $\|x\|_p = \|\mu_t(x)\|_p$, $x\in L^p$, $1\leq p<\ii$ (see, for example, \cite{fk}).

Let $(E,\|\cdot\|_E)$ be a symmetric function space on $((0,\ii),\mu)$ (see, for example, \cite[Ch.II, \S\,4]{kps}). Define
$$
E(\mc M)=E(\mc M, \tau)=\{ x\in L^0:\ \mu_t(x)\in E\}
$$
and set
$$
\| x\|_{E(\mc M)}=\| \mu_t(x)\|_E, \ x\in E(\mc M).
$$
It is shown in \cite{ks} that $(E(\mc M), \| \cdot \|_{E(\mc M)})$ is a Banach space and that conditions
$$
x\in E(\mc M), \ y\in L^0, \ \mu_t(y)\leq \mu_t(x) \text{ \ for all \ }t>0
$$
imply that $y\in E$ and $\| y\|_E\leq \| x\|_E$, in which case $(E(\mc M), \| \cdot \|_{E(\mc M)})$ is said to be a
{\it noncommutative  symmetric space}.

A noncommutative  symmetric space $(E(\mc M), \| \cdot \|_{E(\mc M)})$ is called {\it fully symmetric} if conditions
$$
x\in E(\mc M), \ y\in L^0, \ \int_0^s\mu_t(y)dt\leq  \int_0^s\mu_t(x)dt \ \ \forall \ s>0 \ \  (\text {writing } \  y \prec\prec x)
$$
imply that $y\in E$ and $\| y\|_E\leq \| x\|_E$. For example, $L^p(\mc M)=L^p$, $1\leq p\leq\ii$,
and the  Banach spaces
$$
 (L^1\cap L^\ii)(\mc M) = L^1\cap\mc M \text{ \ \ with \ \ } \|x\|_{L^1\cap \mc M}=\max\{ \|x\|_1, \|x\|_\ii\}
\text{ \ \ and \ \ }
$$
$$
(L^1 + L^\ii)(\mc M) = L^1+\mc M \text{ \ \ with \ \ }
$$
$$
\|x\|_{L^1+\mc M}=\inf \left \{ \|y\|_1+ \|z\|_{\infty}: \ x = y + z, \ y \in L^1, \ z \in \mc M \right \}= \int_0^1 \mu_t(x) dt
$$
are noncommutative fully symmetric spaces (see \cite{ddp}).

Since, given a symmetric function space $E=E(0,\ii)$,
$$
L^1(0,\ii) \cap L^\ii(0,\ii) \su E \su L^1(0,\ii)+L^\ii(0,\ii),
$$
with continuous embedding \cite[Ch.II, \S\,4, Theorem 4.1]{kps}, it follows that
$$
L^1(\mc M) \cap \mc M \subset E(\mc M) \subset L^1(\mc M) + \mc M,
$$
with continuous embedding.

Define
$$
\mc R_\tau= \{x \in L^1 + \mc M: \ \mu_t(x) \to 0 \text{ \ as \ } t\to \ii\}.
$$
It is known that $\mc R_\tau$ is the closure of $L^1\cap\mc M$ in $L^1+\mc M$ \cite[Proposition 2.7]{ddp}, in particular, $(\mc R_\tau, \|\cdot\|_{L^1+\mc M})$ is a noncommutative  fully symmetric space \cite{cl1}. In addition, if $\tau(\mb 1)=\ii$, then a symmetric space $E(\mc M, \tau)$ is contained in
$\mc R_\tau$ if and only if $\mb 1\notin E(\mc M, \tau)$ \cite[Proposition 2.2]{cl1}.

Every noncommutative fully symmetric space $E=E(\mc M)$ is an {\it exact interpolation space} for the Banach couple $(L^1(\mc M),\mc M)$  \cite{ddp1}. Therefore $T(E)\su E$ and $\| T\|_{E\to E}\leq 1$ for $T\in DS$; in particular,
$$
T(\mc R_\tau)\su \mc R_\tau \text{ \ \ and \ \ } \| T\|_{{L^1+\mc M}\to{L^1+\mc M}} \leq 1.
$$

Let $\{T_t\}_{t\ge 0}\su DS^+$ be a strongly continuous semigroup in $L^1$, $\bt: \mbb R_+\to \mbb C$ a Lebesgue measurable function with $\|\bt\|_\ii<C<\ii$, and let $B_t(x)$, $x\in L^1$, $t>0$, be given by (\ref{e22}). We have  $\bt(t)=\bt_1(t)-\bt_2(t)+i\,(\bt_3(t)-\bt_4(t))$, where $\bt_j:\mathbb R_+\to\mathbb R_+$ is Lebesgue measurable function such that $\|\bt_j \|_\ii<C<\ii$ for each $j=1,\dots,4$. Denote
$$
B_t^{(j)}(x)=\frac1t\int_0^t\bt_j(s)T_s(x)ds, \ \ x\in L^1, \ t>0.
$$
Then for each $j$,
$$
\| B_t^{(j)}(x)\|_1\leq C\| x\|_1 \ \ \forall \ x\in L^1 \text{ \ and \ } \| B_t^{(j)}(x)\|_\ii\leq C \| x\|_\ii \ \ \forall \ x\in L^1\cap L^\ii.
$$
Consequently, $C^{-1}B_t^{(j)}$ is a positive {\it absolute contraction} in $L^1$, which, by \cite[Proposition 1.1]{cl}, admits a unique extension to a positive Dunford-Schwartz operator $D_t^{(j)}$. Therefore, $C D_t^{(j)}$, $t>0$, is the unique extension of $B_t^{(j)}$ to the Banach space $L^1+ L^\ii$. By linearity, $B_t$ admits a unique extension to $L^1+ L^\ii$, which we will also denote by $B_t$.

Since a noncommutative fully symmetric space $E=E(\mc M)$ is an exact interpolation space for the Banach couple $(L^1,\mathcal M)$, it now follows that $B_t(E)\su E$ and $\|B_t\|_{E\to E}\leq C$ for every $t>0$.

\begin{teo}\label{t41}
Let $\{T_t\}_{t\ge 0}\su DS^+$ be a strongly continuous semigroup in $L^1$, and let $\bt(t)$ be a bounded Besicovitch function such that $\|\bt(t)\|_\ii<C$. Then, given $x\in \mc R_\tau$, the averages (\ref{e22})
converge a.u. to some $\wh x\in \mc R_\tau$ as $t\to\ii$.
\end{teo}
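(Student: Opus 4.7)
The strategy is to extend Theorem \ref{t21} from $L^1$ to $\mc R_\tau$ by a Banach-principle style argument, exploiting that $\mc R_\tau$ is the $\|\cdot\|_{L^1+\mc M}$-closure of $L^1\cap\mc M$. Given $x\in\mc R_\tau$, pick $\{x_m\}\su L^1\cap\mc M$ with $\|x-x_m\|_{L^1+\mc M}\to 0$. Since each $x_m\in L^1$, Theorem \ref{t21} already supplies $\wh{x_m}\in L^1$ with $B_t(x_m)\to\wh{x_m}$ a.u. as $t\to\ii$; in particular $\{B_t(x_m)\}_{t>0}$ is a.u. Cauchy at infinity.

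The main obstacle is to upgrade Lemma \ref{l23} so that the size of the input is measured in $\|\cdot\|_{L^1+\mc M}$ rather than in $\|\cdot\|_1$. Given $y\in\mc R_\tau$ with $\|y\|_{L^1+\mc M}<\gm$, I would split $y=y\,e_\gm+y\,e_\gm^\perp$, where $e_\gm$ is the spectral projection of $|y|$ associated with $[0,\gm]$. The bounded piece satisfies $\|y\,e_\gm\|_\ii\leq\gm$, while the identity $\|y\|_{L^1+\mc M}=\int_0^1\mu_t(y)\,dt$ together with monotonicity of $\mu_t(y)$ forces $\mu_1(y)\leq\gm$, hence $\tau(e_\gm^\perp)\leq 1$ and $\|y\,e_\gm^\perp\|_1\leq\int_0^1\mu_t(y)\,dt<\gm$. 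Applying Lemma \ref{l23} to $y\,e_\gm^\perp\in L^1$ combined with the contractive bound $\|B_t(y\,e_\gm)\|_\ii\leq C\gm$ yields a projection $e$ with $\tau(e^\perp)$ and $\sup_{t>0}\|eB_t(y)e\|_\ii$ simultaneously controlled in terms of $\gm$. Consequently, the family $\{B_t\}_{t>0}$, viewed as maps $\mc R_\tau\to L^0$, is bilaterally uniformly equicontinuous in measure at zero in the obvious extension of the definition preceding Lemma \ref{l24}.

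From here the argument mirrors the proof of Theorem \ref{t21}. Fix $\ep,\dt>0$ and a dense sequence $\{t_n\}$ of positive rationals in $(0,\ii)$. The $\mc R_\tau$-analogue of Lemma \ref{l24}, applied to $\|x-x_m\|_{L^1+\mc M}\to 0$, produces $x_{m_0}\in\{x_m\}$ and a projection $e_1$ with $\tau(e_1^\perp)\leq\ep/2$ and $\sup_n\|B_{t_n}(x-x_{m_0})e_1\|_\ii\leq\dt/3$. A density-in-measure subsequence argument, exactly as in the proof of Theorem \ref{t21}, lifts the supremum from $\{t_n\}$ to all $t>0$. Combined with an a.u. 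Cauchy projection $e_2$ for $\{B_t(x_{m_0})\}_{t>0}$, which exists because $x_{m_0}\in L^1\cap\mc M\su L^1$, the projection $e=e_1\wedge e_2$ witnesses that $\{B_t(x)\}_{t>0}$ is a.u. Cauchy as $t\to\ii$.

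Completeness of $L^0$ under a.u. convergence then yields $\wh x\in L^0$ with $B_t(x)\to\wh x$ a.u. Since a.u. convergence implies convergence in measure, the bound $\|B_t(x)-B_t(x_m)\|_{L^1+\mc M}\leq C\|x-x_m\|_{L^1+\mc M}$, valid for every $t>0$, combined with closedness of the unit ball of $L^1+\mc M$ in the measure topology, yields $\|\wh x-\wh{x_m}\|_{L^1+\mc M}\leq C\|x-x_m\|_{L^1+\mc M}$, so $\wh{x_m}\to\wh x$ in $L^1+\mc M$. Since each $\wh{x_m}\in L^1\su\mc R_\tau$ and $\mc R_\tau$ is closed in $L^1+\mc M$, we conclude $\wh x\in\mc R_\tau$.
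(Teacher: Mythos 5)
Your argument is essentially correct but takes a genuinely different route from the paper's. The paper splits $x\in\mc R_\tau$ spectrally as $x=x_m+y_m$ with $x_m=\int_{1/m}^\ii\lb\,de_\lb\in L^1$ and $0\le y_m\le m^{-1}\mathbf 1$; the remainder is then handled by the trivial uniform bound $\sup_{t>0}\|B_t(y_m)\|_\ii\le C/m$, which needs no projection at all, so no maximal inequality beyond the one already inside Theorem \ref{t21} is required, and the limit is placed in $\mc R_\tau$ via the submajorization $\mu_s(\wh x)\prec\prec\mu_s(C\,x)$ together with full symmetry of $\mc R_\tau$. You instead approximate $x$ in $\|\cdot\|_{L^1+\mc M}$ by elements of $L^1\cap\mc M$, prove a new maximal inequality (b.u.e.m. at zero of $\{B_t\}$ on $(L^1+\mc M,\|\cdot\|_{L^1+\mc M})$, via a correct spectral splitting at level $\gm$), and rerun the Banach-principle argument of Theorem \ref{t21}; your identification of the limit through $\|\wh x-\wh{x_m}\|_{L^1+\mc M}\le C\|x-x_m\|_{L^1+\mc M}$ and closedness of $\mc R_\tau$ in $L^1+\mc M$ is a clean alternative to the submajorization step. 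The one point that needs more care is your appeal to ``the $\mc R_\tau$-analogue of Lemma \ref{l24}'': that lemma is stated, and proved in \cite{li1}, for maps $L^1\to L^1$ and sequences null in $\|\cdot\|_1$, and the bilateral-to-one-sided upgrade is not a formality, so the analogue cannot simply be asserted. Your own splitting rescues it: writing $x-x_m=u_m+v_m$ with $\|u_m\|_\ii\le\|x-x_m\|_{L^1+\mc M}^{1/2}\to0$ and $\|v_m\|_1\le\|x-x_m\|_{L^1+\mc M}\to0$, the $u_m$-part is controlled one-sidedly with no projection by $\sup_{t>0}\|B_t(u_m)\|_\ii\le C\|u_m\|_\ii$, while Lemma \ref{l24} as stated applies to $\{v_m\}\su L^1$ (restricting to a tail of the sequence so that both bounds are small for the selected index). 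With that reduction made explicit your proof is complete; it is longer than the paper's, but it yields as a by-product a maximal inequality valid on all of $L^1+\mc M$.
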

\begin{proof}
Without loss of generality assume that $x\ge 0$, and let $\{e_\lb\}_{\lb\ge 0}$ be the spectral family of $x$. Given $m\in\mbb N$, denote $x_m=\int_{1/m}^\ii\lb de_\lb$ and $y_m = \int_0^{1/m} \lb de_\lb$. Then we have
$0 \leq y_m\leq m^{-1}\,\mathbf 1$, $x_m \in L^1$, and $x = x_m + y_m$ for all $m$.

Fix $\ep > 0$. By Theorem \ref{t21}, $B_t(x_m) \to \wh  x_m \in L^1$ a.u. as $t \to \ii$ for each $m$.
Therefore, there exists a sequence $\{e_m\}\su \mc P(\mc M)$ such that
$$
\tau(e_m^{\perp})\leq \frac \ep{2^m} \text{ \ \ and \ \ }
\| (B_t(x_m)- \wh x_m)e_m\|_\ii\to 0 \text{ \ \ as \ \ } t \to \ii.
$$
Then it follows that, for for some $t(m)>0$,
$$
\|(B_t(x_m)-B_{t'}(x_m))e_m\|_{\ii} < \frac1m \ \ \ \forall \ t, t' \ge t(m).
$$
Since $\|y_m\|_\ii\leq m^{-1}$, we have
\begin{equation*}
\begin{split}
\| (B_t(x)-B_{t'}(x))e_m\|_\ii &\leq
\| ((B_t(x_m)-B_{t'}(x_m))e_m\|_{\ii} + \| (B_t(y_m)-B_{t'}(y_m))e_m\|_\ii\\
&<\frac1m + \|B_t(y_m)e_m\|_{\ii} + \| B_{t'}(y_m)e_m\|_{\ii}  \leq \frac{1+2C}m
\end{split}
\end{equation*}
for each $m$ and all $t, \ t' \geq t(m)$.

If $e =\bigwedge\limits_{m\in\mbb N} e_m$, then
$$
\tau(e^\perp)\leq\ep\text{ \ \ and \ \ } \| (B_t(x)-B_{t'}(x))e\|_\ii< \frac{1+2C}m \ \ \ \forall \ t, t'\ge t(m).
$$
This means that $\{B_t(x)\}_{t>0}$ is a Cauchy net with respect to a.u. convergence.
Since $L^0$ is complete with respect to a.u. convergence \cite[Remark 2.4]{cls}, we conclude that the net
$\{B_t(x)\}_{t>0}$ converges a.u. to some $\widehat x \in L^0$.

As $B_t(x)\to\wh x$ a.u., it is clear that $B_t(x)\to\wh x$ in measure.
Since  $L^1+\mc M$ satisfies the Fatou property \cite[\S 4]{dp}, its unit ball is closed in measure topology \cite[Theorem 4.1]{ddst},  and the inequality $\|B_t(x)\|_{L^1+\mc M}\leq C\,\|x\|_{L^1+\mc M}$ implies that $\widehat x \in L^1+\mc M$.
In addition, $B_t(x)\to\widehat x$ in measure implies that $\mu_s(B_t(x))\to \mu_s(\widehat x)$ almost everywhere on
$((0,\ii),\mu)$ for each $s>0$ (this can be shown as in the commutative case; see, for example, \cite[Ch.II, \S\,2, Property $11^\circ$]{kps}).

Since  $C^{-1}B_t \in DS^+$, we have $C^{-1}\,\mu_s(B_t(x))=\mu_s(C^{-1}B_t(x))\prec\prec \mu_t( x)$. This means  that
$$
C^{-1}\int_0^u\mu_s(B_t(x))ds =\int_0^u\mu_s(C^{-1}B_t(x))ds\leq\int_0^u\mu_s(x)ds \ \ \ \forall \  u>0,\ t > 0.
$$
Now, Fatou property for $L^1((0,u), \mu)$ and $\mu_s(B_t(x))\to \mu_s(\wh x)$ in measure on $((0,u), \mu)$ imply that
$$
\int_0^u\mu_s(\widehat x)ds\leq C\,\int_0^u\mu_s(x)ds=\int_0^u\mu_s(C\,x)ds
\ \ \ \forall \ u>0,
$$
that is, $\mu_s(\widehat x)\prec\prec \mu_s(C\,x)$. Therefore, since $\mc R_\tau$ is a fully symmetric space and
$C\,x\in\mc R_\tau$, it follows that $\wh x\in\mc R_\tau$.
\end{proof}

The following is an application of Theorem \ref{t41} to fully symmetric spaces.

\begin{teo}\label{t42}
Let $E=E(\mc M,\tau)$ be a noncommutative fully symmetric space such that $\mb 1\notin E$, and let $\{T_t\}_{t\ge 0}$ and
$\bt(t)$ be as in Theorem \ref{t41}. Then, given $x\in E$, the averages (\ref{e22}) converge a.u. to some $\wh x\in E$ as
$t\to\ii$.
\end{teo}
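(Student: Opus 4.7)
My plan is to reduce to Theorem \ref{t41} and then place the limit inside $E$ by exploiting the full symmetry.

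First, since $\mb 1\notin E$, the inclusion $E\su\mc R_\tau$ holds by \cite[Proposition 2.2]{cl1}. Hence for any $x\in E$, Theorem \ref{t41} applies and produces some $\wh x\in\mc R_\tau$ such that $B_t(x)\to\wh x$ almost uniformly as $t\to\ii$. Almost uniform convergence forces convergence in measure, which in turn yields $\mu_s(B_t(x))\to\mu_s(\wh x)$ almost everywhere on $((0,\ii),\mu)$ as $t\to\ii$, using the standard transfer of measure convergence to the generalized singular numbers already invoked in the proof of Theorem \ref{t41}.

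The remaining task is to show $\wh x\in E$. For this I would reuse the submajorization setup from the discussion preceding Theorem \ref{t41}. With the decomposition $\bt=\bt_1-\bt_2+i(\bt_3-\bt_4)$ into non-negative measurable parts bounded by $C$, each operator $C^{-1}B_t^{(j)}$ extends to a positive Dunford-Schwartz operator on $L^1+\mc M$, so Hardy-Littlewood submajorization gives $C^{-1}B_t^{(j)}(x)\prec\prec x$ for every $t>0$ and every $j$. Assembling the four pieces via the triangle inequality $\mu(a+b)\prec\prec\mu(a)+\mu(b)$ yields
$$
\int_0^u\mu_s(B_t(x))\,ds\leq 4C\int_0^u\mu_s(x)\,ds\quad\text{for all }u>0,\ t>0.
$$
Since $\mu_s(B_t(x))\to\mu_s(\wh x)$ a.e.\ on each finite interval $(0,u)$, Fatou's lemma on $((0,u),\mu)$ transfers the estimate to the limit and gives $\int_0^u\mu_s(\wh x)\,ds\leq\int_0^u\mu_s(4Cx)\,ds$ for all $u>0$; that is, $\wh x\prec\prec 4Cx$. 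Because $E$ is fully symmetric and $4Cx\in E$, this forces $\wh x\in E$.

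The only step that requires genuine care is the transfer of the submajorization inequality from the prelimit averages $B_t(x)$ to $\wh x$ via Fatou, which rests on the pointwise a.e.\ convergence of the singular numbers obtained from measure convergence. Once this is in hand, everything else is bookkeeping on top of Theorem \ref{t41} and the definition of a fully symmetric space, so I do not expect a serious obstacle.
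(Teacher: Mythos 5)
Your proposal is correct and follows essentially the same route as the paper: reduce to Theorem \ref{t41} via the inclusion $E\su\mc R_\tau$, then transfer a submajorization $\wh x\prec\prec (\mathrm{const})\,x$ to the limit by Fatou's lemma on the singular value functions and invoke full symmetry of $E$. The only cosmetic difference is your constant $4C$ (obtained from the four-piece decomposition of $\bt$) versus the paper's $C$ (obtained by noting that $C^{-1}B_t$ is itself a Dunford--Schwartz operator), which is immaterial for concluding $\wh x\in E$.
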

\begin{proof}
Since $\mb 1\notin E$, it follows that $E\su\mc R_\tau$. Thus, by Theorem \ref{t41}, the averages $\{B_t(x)\}_{t>0}$
 converge a.u. to some $\wh x\in\mc R_\tau$ as $t \to \ii$.  Since $\mu_s(\wh x)\prec\prec\mu_s(C\,x)$
(see the proof of Theorem \ref{t41}) and $E$ is a fully symmetric space, we conclude that $\wh x\in  E$.
\end{proof}

Repeating the proofs of Theorems \ref{t41} and \ref{t42}, we obtain the following extension of Theorem \ref{t22}.
\begin{teo}\label{t43}
Let $\{T_t\}_{t\ge 0}\su DS^+$ be a strongly continuous semigroup in $L^1$, $\bt(t)$ be a bounded zero-Besicovitch function, and let  $E=E(\mc M,\tau)$ be a noncommutative fully symmetric space such that $\mb 1\notin E$. Then,  given
$x\in E$, the averages (\ref{e22}) converge a.u. to $\al(x)\,x$ as $t\to 0$ for some $\al(x)\in\mbb C$.
\end{teo}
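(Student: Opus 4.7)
The plan is to mirror the proofs of Theorems \ref{t41} and \ref{t42}, replacing $t\to\ii$ by $t\to 0$ and using the local $L^1$ result Theorem \ref{t22} in place of Theorem \ref{t21}. Since $\mb 1\notin E$ forces $E\su \mc R_\tau$, it suffices to treat $x\in\mc R_\tau$, and the inclusion $\wh x\in E$ will follow from the same Fatou-property / $\prec\prec$-domination argument already used in the proofs of Theorems \ref{t41}--\ref{t42}.

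For the a.u. convergence step I would, without loss of generality, take $x\ge 0$ and split $x=x_m+y_m$ via its spectral family $\{e_\lb\}$, setting $x_m=\int_{1/m}^\ii \lb\,de_\lb\in L^1$ (well defined since $\mu_t(x)\to 0$ on $\mc R_\tau$) and $y_m=\int_0^{1/m}\lb\,de_\lb$ with $\|y_m\|_\ii\le 1/m$. Theorem \ref{t22} applied to each $x_m$ yields projections $e_m\in\mc P(\mc M)$ with $\tau(e_m^\perp)\le\ep/2^m$ and $\|(B_t(x_m)-B_{t'}(x_m))e_m\|_\ii<1/m$ for all $t,t'$ in a small enough right neighborhood of $0$; combining this with $\|B_t(y_m)e_m\|_\ii\le C/m$ exactly as in Theorem \ref{t41} gives
\[
\|(B_t(x)-B_{t'}(x))e_m\|_\ii\le (1+2C)/m,
\]
so $e=\bigwedge_m e_m$ has $\tau(e^\perp)\le\ep$ and $\{B_t(x)\}_{t>0}$ is a.u. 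Cauchy as $t\to 0$. Completeness of $L^0$ under a.u. convergence then delivers $\wh x\in L^0$ with $B_t(x)\to\wh x$ a.u., and the $\prec\prec$-domination argument of Theorem \ref{t41} places $\wh x$ in $\mc R_\tau$, hence (as $E$ is fully symmetric) in $E$.

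To identify $\wh x$, I would exploit the fact that the scalar $\al(x_m)$ produced by Theorem \ref{t22} does not actually depend on $m$: unwinding that proof shows $\al(x_m)=\lim_{n\to\ii}p_n(0)$ for any fixed sequence of trigonometric polynomials $\{p_n\}$ approximating $\bt$ in the sense of (\ref{e21z}) with $\ep=1/n$, a scalar determined by $\bt$ alone. Writing $\al:=\al(x_m)$, one has $B_t(y_m)=B_t(x)-B_t(x_m)\to\wh x-\al x_m$ a.u., hence in measure; combined with $\|B_t(y_m)\|_\ii\le C/m$ and closure of the unit ball of $\mc M$ in measure topology, this forces $\wh x-\al x_m\in\mc M$ with $\|\wh x-\al x_m\|_\ii\le C/m$. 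Writing $\wh x-\al x=(\wh x-\al x_m)-\al y_m$ and using $\|y_m\|_\ii\le 1/m$ then gives $\|\wh x-\al x\|_\ii\le (C+|\al|)/m$ for every $m$, so $\wh x=\al x$. The standard decomposition $x=x_1-x_2+i(x_3-x_4)$ with $x_j\ge 0$ handles complex $x$, and we set $\al(x):=\al$.

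The main obstacle is the identification step: the a.u. Cauchy estimate transfers mechanically from Theorem \ref{t41}, but pushing the conclusion $\wh x=\al(x)\,x$ out from the $L^1$-truncations $x_m$ to $x\in E$ requires knowing that the constant supplied by Theorem \ref{t22} is the same for every $x_m$. Once this is read off from the proof of Theorem \ref{t22}, the rest is routine bookkeeping.
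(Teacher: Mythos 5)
Your proposal is correct and follows essentially the route the paper intends: the paper's own proof of Theorem \ref{t43} consists of the single instruction to repeat the arguments of Theorems \ref{t41} and \ref{t42}, with the local Theorem \ref{t22} playing the role of Theorem \ref{t21}. Your extra care with the identification of the limit --- observing that the scalar $\al(x_m)$ supplied by Theorem \ref{t22} equals $\lim_{n\to\ii}p_n(0)$ and is therefore independent of $m$, and then deducing $\wh x=\al\,x$ from the uniform bounds $\|\wh x-\al\,x_m\|_\ii\leq C/m$ and $\|y_m\|_\ii\leq 1/m$ --- correctly fills in the one step the paper leaves implicit.
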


 \begin{rem}
Employing the approach in the proof of \cite[Theorem 5.7]{cl1} (see also \cite{cl2}), one can see that the assertion of Theorem \ref{t42} (Theorem \ref{t43}) remains valid when the semigroup $\mbb R_+$ is replaced by $\mbb R_+^d$,
$d\in \mbb N$, so that the averages
$$
{\mb B}_t(x)=\frac1{t^d}\int_{[0,t]^d}\bt(\mb s)T_{\mb s}(x)d\mb s, \ \ x\in E,
$$
converge a.u. to some $\wh x\in E$ as $t\to\ii$ (respectively, to $\al(x)\,x$ as $t\to 0$).
\end{rem} 

\section{Applications to noncommutative Orlicz, Lorentz and Marcinkiewicz spaces}

Below we give applications of Theorems \ref{t42} and \ref{t43} to noncommutative Orlicz, Lorentz and Marcinkiewicz spaces.

1. Let $\Phi$ be an {\it Orlicz function}, that is, $\Phi:[0,\ii)\to [0,\ii)$ is left-continuous, convex, increasing and such that $\Phi(0)=0$ and $\Phi(u)>0$ for some $u\ne 0$ (see, for example \cite[Ch.2, \S 2.1]{es}).
If $0 \leq x\in L^0$ and $x=\int_0^\ii\lb de_\lb$ its spectral decomposition, one can define $\Phi(x)=\int_0^\ii\Phi(\lb) de_\lb\in L^0$.

The noncommutative Orlicz space associated with $(\mc M,\tau)$ is the set
$$
L^\Phi=L^\Phi(\mc M, \tau)=\Big\{ x \in L^0:  \Phi(a^{-1}\,|x|)\in  L^1 \text { \ for some \ } a>0\Big\}.
$$
The {\it Luxemburg norm} of an operator  $x \in L^{\Phi}$ is defined as
$$
\| x\|_\Phi=\inf\left \{ a>0: \|\Phi(a^{-1}\,|x|)\|_1 \leq 1\right\}.
$$
It is known that $(L^\Phi, \| \cdot\|_\Phi)$ is a noncommutative fully symmetric space (see, for example, \cite[Corollary 2.2]{cl3}). In addition,
$$
L^\Phi=\left\{x \in L^0: \ \mu_t(x) \in L^\Phi(0,\ii)\right\}\text{ \ \ and \ \ }\| x \|_\Phi=\| \mu_t(x) \|_\Phi\ \ \forall \ x \in L^\Phi.
$$
\cite[Corollary 2.1]{cl3}). The pair $(L^\Phi, \| \cdot\|_\Phi$)  is called the {\it noncommutative Orlicz space} (see, for example \cite{ku}).

If $\tau(\mb 1)<\ii$, then $L^\Phi\su L^1$. If $\tau(\mb 1)=\ii$ and $\Phi(u)>0$ for all $u\ne 0$, then
$ \Phi(a^{-1}\,\mb 1)\notin  L^1$ for each $a>0$, hence $\mb 1\notin L^\Phi$. Therefore, Theorems \ref{t42} and \ref{t43} imply the following.
\begin{teo} \label{t51}
Let $\Phi$ be an Orlicz function such that $\Phi(u)>0$ for all $u>0$.
Let  $\{T_t\}_{t\ge 0}\su DS^+$ be a strongly continuous semigroup in $L^1$, and let $\bt(t)$ be a bounded  Besicovitch (zero-Besicovitch) function. Then, given $x\in L^\Phi$, the averages (\ref{e22}) converge a.u. to some
$\wh x\in L^\Phi$ as $t\to\ii$ (respectively, to $\al(x)\,x$ as $t\to 0$ for same $\al(x)\in\mbb C$).
\end{teo}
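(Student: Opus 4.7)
My plan is to leverage the fact, recorded just before the theorem, that $(L^\Phi,\|\cdot\|_\Phi)$ is a noncommutative fully symmetric space, and then split on whether $\tau(\mathbf 1)$ is finite or infinite, applying Theorems \ref{t42}/\ref{t43} in the infinite case and Theorems \ref{t21}/\ref{t22} combined with a Fatou-type majorization step in the finite case.

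First, assume $\tau(\mathbf 1)=\infty$. Because $\Phi(u)>0$ for every $u>0$, one has
$$
\|\Phi(a^{-1}\mathbf 1)\|_1=\Phi(a^{-1})\tau(\mathbf 1)=\infty\quad\text{for every }a>0,
$$
so $\mathbf 1\notin L^\Phi$. Theorems \ref{t42} and \ref{t43} then apply verbatim to $E=L^\Phi$, yielding a.u.\ convergence of $B_t(x)$ to some $\wh x\in L^\Phi$ as $t\to\infty$ in the bounded Besicovitch case, and a.u.\ convergence to $\alpha(x)\,x$ as $t\to 0$ in the zero-Besicovitch case.

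Next, assume $\tau(\mathbf 1)<\infty$. Then, as recalled in the paragraph preceding the statement, $L^\Phi\subset L^1$, so Theorems \ref{t21} and \ref{t22} apply directly to any $x\in L^\Phi$. In the zero-Besicovitch setting the identification $B_t(x)\to\alpha(x)\,x$ gives a limit that trivially lies in $L^\Phi$. In the bounded Besicovitch setting Theorem \ref{t21} produces $\wh x\in L^1$; to upgrade this membership to $\wh x\in L^\Phi$ I would replay the majorization argument used at the end of the proof of Theorem \ref{t41}. Namely, since $C^{-1}B_t\in DS^+$, one has $C^{-1}\mu_s(B_t(x))\prec\prec\mu_s(x)$, i.e.
$$
C^{-1}\int_0^u\mu_s(B_t(x))\,ds\le\int_0^u\mu_s(x)\,ds\quad\forall u>0,\ t>0;
$$
a.u.\ convergence gives convergence in measure, hence $\mu_s(B_t(x))\to\mu_s(\wh x)$ a.e.\ in $s$; the Fatou property of $L^1((0,u),\mu)$ then yields $\mu_s(\wh x)\prec\prec\mu_s(Cx)$, and full symmetry of $L^\Phi$ together with $Cx\in L^\Phi$ forces $\wh x\in L^\Phi$.

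The only real obstacle is cosmetic, namely the case split itself: the hypothesis $\mathbf 1\notin E$ in Theorems \ref{t42}/\ref{t43} is genuinely used there (to embed $E$ into $\mathcal R_\tau$), but when $\tau(\mathbf 1)<\infty$ the identity may well lie in $L^\Phi$, so one cannot cite those theorems as a black box and must recover membership of the limit in $L^\Phi$ via the majorization step above. Once both cases are written out, the two halves of the theorem follow.
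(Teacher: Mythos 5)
Your proof is correct and follows essentially the same route as the paper, which likewise splits on the two observations that $\mathbf 1\notin L^\Phi$ when $\tau(\mathbf 1)=\infty$ (since $\Phi(a^{-1})\tau(\mathbf 1)=\infty$ for all $a>0$) and $L^\Phi\subset L^1$ when $\tau(\mathbf 1)<\infty$, and then simply cites Theorems \ref{t42} and \ref{t43}. Your explicit handling of the finite-trace case --- where $\mathbf 1$ may lie in $L^\Phi$, so those theorems cannot be invoked verbatim and one must recover $\wh x\in L^\Phi$ from $\mu_s(\wh x)\prec\prec\mu_s(C\,x)$ and full symmetry, exactly as at the end of the proof of Theorem \ref{t41} --- supplies a detail the paper leaves implicit.
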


2. Let $\mu$ be the Lebesgue measure on the interval $(0,\ii)$. Let $L^0(0,\ii)$ be the algebra of (equivalence classes of) almost everywhere finite real-valued measurable functions $f$ on the measure space $((0,\ii),\mu)$ with $\mu\{|f|>\lb\}<\ii$ for some $\lb>0$. The {\it non-increasing rearrangement} of a function $f \in L^0(0,\ii)$ is the function $f^*$ on $(0,\ii)$ defined by
\begin{equation}\label{re}
f^*(t)=\inf\{\lb>0:\ \mu\{|f|>\lb\}\leq t\}
\end{equation}  
(see, for example, \cite[Ch.II, \S\,2]{kps}). Let $\psi$ be a concave function on $[0, \infty)$ with $\psi(0) = 0$ and
$\psi(t) > 0$ for all $t > 0$, and let
$$
\Lambda_\psi(0,\ii)=\left\{f \in  L^0(0,\ii): \ \|f \|_{\Lb_\psi}=\int_0^\ii f^*(t)d\psi(t)<\ii\right\}
$$
be the corresponding {\it Lorentz space}.

It is well known that $(\Lambda_\psi(0,\ii), \|\cdot\|_{\Lambda_\psi})$
is a fully symmetric function space; in addition, if $\psi(\ii)=\lim\limits_{t\to\ii}\psi(t)=\ii$, then $\mb1\notin\Lb_\varphi(0,\ii)$ \cite[Ch.II, \S\,5]{kps}).

Define the {\it noncommutative Lorentz space} (see, for example, \cite{cms}) as
$$
\Lb_\psi=\Lb_\psi(\mc M,\tau)=\left\{ x\in L^0: \ \mu_t(x)\in\Lb_\psi(0,\ii)\right\}
$$
and set
$$
\| x\|_{\Lb_\psi}=\|\mu_t(x)\|_{\Lb_\psi},  \ x\in\Lb_\psi.
$$
It is known \cite{ks} that $(\Lb_\psi,\| \cdot\|_{\Lb_\psi})$  is a noncommutative fully symmetric space. If $\tau(\mb 1)<\ii$, then $\Lb_\psi\su L^1$. If $\tau(\mb 1)=\ii$ and $\psi(\ii)=\ii$, then it follows that $\mb1\notin\Lb_\psi$. Therefore, Theorems \ref{t42} and \ref{t43} imply the following.
\begin{teo} \label{t52}
Let $\psi$ be a concave function on $[0,\ii)$ with $\psi(0) = 0$ and
$\psi(t)>0$ for all $t > 0$, and let $\psi(\ii)=\ii$. Let $\{T_t\}_{t\ge 0}$ and $\bt(t)$ be as in Theorem \ref{t51}.
Then, given $x\in \Lambda_\psi$, the averages (\ref{e22}) converge a.u. to some $\wh x\in\Lb_\psi(\mc M, \tau)$ as $t\to \ii$ (respectively, to $\al(x)\,x$ as $t\to 0$ for same $\al(x)\in\mbb C$).
\end{teo}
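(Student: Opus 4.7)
The plan is to reduce the theorem to a direct application of Theorems \ref{t42} and \ref{t43}. Two ingredients must be confirmed: that $(\Lb_\psi,\|\cdot\|_{\Lb_\psi})$ is a noncommutative fully symmetric space, and that $\Lb_\psi\su\mc R_\tau$. The first is cited in the paragraph preceding the theorem from \cite{ks}, and with the second in place, the extension of $B_t$ to $L^1+\mc M$ constructed in Section~4 restricts to an operator on $\Lb_\psi$ of norm at most $C$, after which Theorems \ref{t41}--\ref{t43} deliver the conclusion.

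To establish $\Lb_\psi\su\mc R_\tau$ I would split on the finiteness of $\tau(\mb 1)$. If $\tau(\mb 1)<\ii$, then $\mu_t(x)=0$ for all $t\ge\tau(\mb 1)$, so $\mc R_\tau=L^1$, and the inclusion $\Lb_\psi\su L^1$ noted right before the statement finishes this case. If $\tau(\mb 1)=\ii$, then $\mu_t(\mb 1)\equiv 1$ on $(0,\ii)$, and the hypothesis $\psi(\ii)=\ii$ yields
$$
\|\mb 1\|_{\Lb_\psi}=\int_0^\ii\mu_t(\mb 1)\,d\psi(t)=\psi(\ii)=\ii,
$$
so $\mb 1\notin\Lb_\psi$; by \cite[Proposition 2.2]{cl1} this is equivalent to $\Lb_\psi\su\mc R_\tau$.

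Once $\Lb_\psi\su\mc R_\tau$ is secured, Theorem \ref{t41} gives a.u. convergence $B_t(x)\to\wh x\in\mc R_\tau$ as $t\to\ii$, together with the submajorization $\mu_s(\wh x)\prec\prec\mu_s(C\,x)$ from its proof. Since $\Lb_\psi$ is fully symmetric and $C\,x\in\Lb_\psi$, this forces $\wh x\in\Lb_\psi$, as desired. The local assertion is analogous: Theorem \ref{t43} (when $\mb 1\notin\Lb_\psi$) or Theorem \ref{t22} applied to the inclusion $\Lb_\psi\su L^1$ (when $\tau(\mb 1)<\ii$) produces $B_t(x)\to\al(x)\,x$ a.u. as $t\to 0$, and $\al(x)\,x$ lies trivially in $\Lb_\psi$.

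The sole nontrivial point is verifying $\mb 1\notin\Lb_\psi$ when $\tau(\mb 1)=\psi(\ii)=\ii$, which is the one-line computation above; the rest is bookkeeping. No new estimates are needed beyond the material developed in Sections~3 and 4.
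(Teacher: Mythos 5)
Your proposal is correct and follows essentially the same route as the paper, which proves Theorem \ref{t52} simply by observing (in the paragraph preceding it) that $\Lb_\psi(\mc M,\tau)$ is a noncommutative fully symmetric space, that $\Lb_\psi\su L^1$ when $\tau(\mb 1)<\ii$, and that $\tau(\mb 1)=\ii$ together with $\psi(\ii)=\ii$ forces $\mb 1\notin\Lb_\psi$, after which Theorems \ref{t42} and \ref{t43} apply. Your only addition is to spell out the finite-trace case explicitly (via $\mc R_\tau=L^1$, Theorem \ref{t41} and the submajorization $\mu_s(\wh x)\prec\prec\mu_s(C\,x)$), which is a harmless and in fact slightly more careful handling of a case the paper passes over implicitly.
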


3. Let $\psi$ be as above, and let
$$
M_\psi(0,\ii)=\left\{f\in L^0(0,\ii): \ \|f \|_{M_\psi}=\sup\limits_{0<s<\ii}\frac1{\psi(s)}\int_0^s f^*(t) d t<\ii\right\}
$$
be the corresponding {\it Marcinkiewicz space}. It is known that $(M_\psi(0,\ii), \|\cdot\|_{M_\psi})$ is a fully symmetric function space. Besides,  $\mb 1\notin  M_\psi(0,\ii)$ if and only if $\lim\limits_{t\to\ii}\frac{\psi(t)}{t}=0$ \cite[Ch.II, \S\,5]{kps}). Also, if $\psi(+0)>0$ and $\psi(\ii)<\ii$, then $M_\psi(0,\ii)= L^1(0,\ii)$ as sets.

Define the {\it noncommutative  Marcinkiewicz space} as
$$
M_\psi=M_\psi(\mc M, \tau)=\left\{ x\in L^0: \ \mu_t(x)\in M_\psi(0,\ii)\right\}
$$
and set
$$
\| x\|_{M_\psi}=\| \mu_t(x)\|_{M_\psi},  \ x\in M_\psi(\mc M, \tau)
$$
(see, for example, \cite{cms}). It is known \cite{ks} that $(M_\psi,\|\cdot\|_{M_\psi})$  is a noncommutative fully symmetric space. If $\tau(\mb 1)<\ii$ or $\psi(+0)>0$ and $\psi(\ii)<\ii$, then $M_\psi\su L^1$. If  $\tau(\mb 1)=\infty$ and
$\lim\limits_{t\to\ii}\frac{\psi(t)}{t}=0$, then  $\mb 1\notin M_\psi$.

Thus, the corresponding version of Theorem \ref{t52} holds for Marcinkiewicz spaces $M_\psi(\mc M, \tau)$ if we replace condition $\psi(\ii)=\ii$  by $\lim\limits_{t\to\ii}\frac{\psi(t)}t=0$.

\section{The commutative case}

In this section we present applications of Theorems \ref{t42} and \ref{t43} to fully symmetric function spaces.

Let $(\Om,\nu)=(\Om,\mc A,\nu)$ be a {\it Maharam measure space} (see, for example, \cite[Ch.1, \S\,1.2, Sections 1.1.7, 1.1.8]{Ku}), that is,
\begin{enumerate}[(i)]
\item
$\nu$ is a countably additive function defined on a $\sigma$-algebra $\mc A$ of subsets of a set $\Om$ with the values in the extended half-line $[0,\ii]$;
\item
if $A\su\Om$ and $A\cap F\in\mc A$ for all $F\in\mc A$ with $\nu(F)<\ii$, then $A\in\mc A$;
\item
if $A\su F\in \mc A$ and $\nu(F)$=0, then $A\in\mc A$;
\item
for any $A\in\mc A$ there exists $F\in\mc A$ such that $F\su A$ and $\nu(F)<\ii$;
\item
the Boolean algebra $\wh{\mc A}$ of classes of $\nu$-almost everywhere equal sets in $\mc A$ is order complete.
\end{enumerate}
Clearly, every complete $\sigma$-finite measure space is Maharam measure space.

It is well known that the $\ast$-algebra $L^\ii(\Om,\nu)$ of (equivalence classes of) essentially bounded measurable complex-valued functions defined on a Maharam measure space $(\Om,\nu)$ is a commutative von Neumann algebra with a semifinite normal faithful trace $\nu(f)=\int_\Om f\,d\nu$, $0\leq f \in L^\ii(\Om,\nu)$.
The converse is also true: any commutative von Neumann algebra $\mc M$  with a semi-finite normal faithful trace
$\tau$ is $\ast$-isomorphic to the $\ast$-algebra $L^\ii(\Om,\nu)$ for some Maharam measure space $(\Om,\nu)$ such that
$\tau(f)=\int_\Om f\,d\nu$, $0 \leq f \in L^\ii(\Om,\nu)$ (see, for example, \cite[Ch.7, \S\,7.3]{di}).

If $(\Om,\nu)$ is a Maharam measure space and $\mc M=L^\ii(\Om,\nu)$ with $\tau(f)=\int_\Om f\,d\nu$, $0\leq f\in L^\ii(\Om,\nu)$, then $L^0=L^0(\mc M,\tau) =L^0(\Om,\nu)$ is the algebra of (equivalence classes of) almost everywhere finite measurable complex-valued functions on $(\Om,\nu)$ with $\nu\{|f|>\lb\}<\ii$ for some $\lambda >0$. 
The non-increasing rearrangement of a function $f \in L^0(\Om,\nu)$ is the function $f^*$ on $(0,\ii)$ defined by (\ref{re}).

Let $(E(0,\ii),\|\cdot\|_E)$ be a symmetric function space on $((0,\ii),\mu)$. Let
$$
E(\Om, \nu)=E(\mc M, \tau)=\left\{f\in L^0(\Om, \nu):\ f^*(t)\in E(0,\ii)\right\}
$$
and
$$
\| f\|_{E(\Om)}=\| f\|_{E(\mc M)}=\| f^*(t)\|_E,  \ f\in E(\Omega,\nu).
$$
Then $(E(\Om,\nu),\|\cdot\|_{E(\Om)})$  is a symmetric function space on $(\Om,\nu)$.
Recall that this space is a fully symmetric function space if conditions
$$
f\in E(\Om,\nu), \ g\in L^0(\Om,\nu), \ \int_0^s g^*(t)dt\leq\int_0^s f^*(t)dt \ \ \forall \ s>0
$$
imply that $g\in  E(\Om,\nu)$ and $\| g\|_{E(\Om)}\leq \| f\|_{E(\Om)}$. Note that if $\nu(\Om)<\ii$, then
$E(\Om,\nu)\su L^1(\Om,\nu)$.

A net $\{ f_\al\}\su L^0(\Om, \nu)$ converges almost uniformly (a.u.) to $f\in L^0(\Om,\nu)$ if for any $\ep>0$ there
is a set $A\in\mc A$ such that $\tau(\Om\sm A)\leq \ep$ and $\| (f-f_\al)\,\chi_A\|_\ii\to 0$, where $\chi_A$ is the characteristic function of $A$.

Now we can state the following corollary of Theorems \ref{t42} and \ref{t43}.
\begin{teo}\label{t61}
Let $(\Om,\nu)$ be a Maharam measure space. Let $E=E(\Om,\nu)$ be a fully symmetric function space on $(\Om,\nu)$ such that $\mb 1\notin E$ if $\nu(\Om)=\ii$. If $\{T_t\}_{t\ge 0}\su DS^+$ is a strongly continuous semigroup in $L^1(\Om, \nu)$ and $\bt(t)$ a bounded  Besicovitch (zero-Besicovitch) function, then, given $f\in E$, the averages (\ref{e22}) converge a.u. to some $\wh f\in E$ as $t\to\ii$ (respectively, to
$\al(f)\,f$ as $t\to 0$ for some $\al(f)\in\mbb C$).
\end{teo}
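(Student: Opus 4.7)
The plan is to recognize Theorem \ref{t61} as a direct specialization of Theorems \ref{t42} and \ref{t43} through the standard identification of Maharam measure spaces with commutative semifinite von Neumann algebras. Concretely, I would set $\mc M = L^\ii(\Om,\nu)$ equipped with the trace $\tau(f) = \int_\Om f\, d\nu$ for $0\leq f\in\mc M$; by the discussion in Section 6 this is a semifinite commutative von Neumann algebra with $L^0(\mc M,\tau) = L^0(\Om,\nu)$, and the hypothesis that $\{T_t\}_{t\geq 0}\su DS^+$ is strongly continuous in $L^1(\Om,\nu) = L^1(\mc M,\tau)$ places us inside the noncommutative setup of Section 3.

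Next I would verify the dictionary needed to transport Theorems \ref{t42} and \ref{t43}. The spectral family of $|f|$ is $e_\lb = \chi_{\{|f|\leq \lb\}}$, so $\tau(e_\lb^\perp) = \nu\{|f|>\lb\}$ and hence $\mu_t(f) = f^*(t)$ for every $t>0$. This shows $E(\mc M,\tau) = E(\Om,\nu)$ with equal norms, so the function space $E$ appearing in Theorem \ref{t61} is itself a noncommutative fully symmetric space. Moreover, every projection in $\mc M$ is a characteristic function $\chi_A$ with $A\in \mc A$, $\tau(\chi_A^\perp) = \nu(\Om\sm A)$, and the uniform norm on $\mc M$ is the essential supremum. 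Consequently, the commutative a.u.\ convergence recalled at the end of Section 6 is a verbatim restatement of the noncommutative a.u.\ convergence from Section 2.

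The only remaining hypothesis to check is $\mb 1\notin E$, which is required in the statements of Theorems \ref{t42} and \ref{t43}. If $\nu(\Om) = \ii$ this is assumed in Theorem \ref{t61}. If $\nu(\Om) < \ii$, then for every $f\in L^1 + \mc M$ we have $\mu_t(f) = 0$ for all $t > \nu(\Om)$, so $\mc R_\tau = L^1 + \mc M$ and the inclusion $E \su \mc R_\tau$ holds automatically. Inspection of the proofs of Theorems \ref{t42} and \ref{t43} shows that $\mb 1\notin E$ is invoked there only to secure $E \su \mc R_\tau$ (through \cite[Proposition 2.2]{cl1}), so the argument carries through unchanged in the finite-measure case as well.

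With these identifications in place, the Besicovitch case of Theorem \ref{t61} follows by applying Theorem \ref{t42} to $f\in E$ and reading the result back through the dictionary above, and the zero-Besicovitch case follows identically from Theorem \ref{t43}. I do not anticipate any serious obstacle: the statement is essentially a translation through the commutative-von-Neumann-algebra identification, and the only mildly delicate point is the treatment of the finite-measure case, handled by the observation that $\mc R_\tau = L^1 + \mc M$ whenever $\nu(\Om) < \ii$.
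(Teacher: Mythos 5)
Your proposal is correct and follows essentially the same route as the paper, which states Theorem \ref{t61} as a direct corollary of Theorems \ref{t42} and \ref{t43} after setting up precisely the dictionary you describe ($\mc M=L^\ii(\Om,\nu)$ with $\tau(f)=\int_\Om f\,d\nu$, the identity $\mu_t(f)=f^*(t)$, projections as characteristic functions, and the matching notions of a.u.\ convergence). Your explicit handling of the finite-measure case via the observation that $\mc R_\tau=L^1+\mc M$ when $\nu(\Om)<\ii$ is a correct and welcome detail that the paper only hints at by remarking that $E(\Om,\nu)\su L^1(\Om,\nu)$ in that case.
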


\end{document}